\documentclass{amsart}
\usepackage{bbm}
\usepackage{xypic}
\usepackage{amssymb}
\usepackage{dsfont}
\usepackage{hyperref}
\usepackage{booktabs}
\usepackage{thmtools}
\usepackage{thm-restate}
\usepackage{cleveref}
\usepackage[table]{xcolor}
\usepackage{enumitem}
\usepackage{caption}
\usepackage[utf8]{inputenc}
\usepackage[T1]{fontenc}

\usepackage{comment}
\usepackage{amsmath}
\usepackage{enumitem}

\usepackage{tikz-cd}

\xyoption{all}

\newif\iffurther
\furthertrue

\numberwithin{equation}{section}
\numberwithin{figure}{section}
\theoremstyle{plain}
\newtheorem{thm}{Theorem}[section] 
\newtheorem*{thm*}{Theorem}

\newtheorem{prop}[thm]{Proposition}
\newtheorem{cor}[thm]{Corollary}
\theoremstyle{definition}

\newtheorem{defn}[thm]{Definition}

\newtheorem{lem}[thm]{Lemma}

\theoremstyle{remark}
\newtheorem{rem}[thm]{Remark}

\newtheorem{exmpl}[thm]{Example}

\newtheorem*{acknowledgement*}{Acknowledgement}

\newcommand\suchthat{\;\ifnum\currentgrouptype=16 \middle\fi|\;}

\def\Im{{\operatorname{Im}}}

\newcommand{\Z}{\mathbb{Z}}

\usepackage{mathtools}
\DeclarePairedDelimiter\floor{\lfloor}{\rfloor}
\DeclarePairedDelimiter\ceil{\lceil}{\rceil}

\begin{document}

\title{How Many Reflections Make a Dihedral Set Large?}

\author{Be'eri Greenfeld} \address{Department of Mathematics and Statistics, Hunter College CUNY, 695 Park Avenue, New York NY 10065, USA} \email{beeri.greenfeld@hunter.cuny.edu}

\author{George King} 
\address{University of Washington, Department of Mathematics, Box 354350, C-138 Padelford Hall
Seattle, WA 98195-4350 USA}
\email{gking3@uw.edu}

\author{Xiaoxuan Li}
\address{University of Illinois Urbana-Champaign, Siebel School of Computing and Data Science, The Grainger College of Engineering, Thomas M. Siebel Center for Computer Science 201 North Goodwin Avenue, Urbana, IL 61801-2302A}
\email{axxwtgst@gmail.com}

\author{Sam Tacheny} 
\address{University of Washington, Department of Mathematics, Box 354350, C-138 Padelford Hall
Seattle, WA 98195-4350 USA}
\email{samueltacheny@gmail.com}

\thanks{This project has originated from the Washington eXperimental Math Lab (WXML) at the University of Washington under the mentorship of the first-named author.}

\begin{abstract}
Given a size-$k$ subset $S$ of a group $G$, how large can the product set $S^n$ be? 
We study this question, at several layers of refinement, for the infinite dihedral group.

First, we give an explicit formula for the maximum size of $S^n$ among all size-$k$ subsets with a prescribed number of reflections. 
We then determine the optimal number of reflections that a size-$k$ set should contain in order to maximize $|S^n|$. 

When $k$ is fixed and $n\to\infty$, we obtain a clean asymptotic expression for the maximal size of $S^n$.
Moreover, we compute this asymptotic separately for each fixed number of reflections in $S$. 
We show that the number of reflections influences the asymptotic size of $S^n$ only through a multiplicative coefficient, which admits a direct probabilistic interpretation.

Finally, we compute the growth exponent of the maximum of $|S^n|$ when~$k=~n$.
\end{abstract}

\maketitle

\section{Introduction}

Let $G$ be a group and let $S\subseteq G$ be a subset. Denote $S^n=\{s_1\cdots s_n|s_1,\dots,s_n\in~S\}$. If $S$ is a finite, symmetric, generating subset of $G$ containing $1$ then the function $\gamma_{G,S}(n)=|S^n|$ is the growth function of $G$; this is the most important large-scale geometric invariant associated with a finitely generated group. Gromov's celebrated theorem \cite{Gro} asserts that a finitely generated group has a polynomial growth if and only if it is virtually nilpotent. 
The space of growth functions of groups is still widely mysterious; see e.g. \cite{BartholdiErschler14,BartholdiErschler12,ErschlerZheng20,Gri84,Grigorchuk90,KassabovPak13,ShalomTao10} and references therein.

Now fix a number $k$ and consider all possible subsets of $G$ of cardinality $k$:
\[
\gamma_G(k,n) = \max_{\substack{S\subseteq G, \ |S|=k}} |S^n|.
\]
This can be thought of as a `uniform' version of the notion of growth, applicable to an arbitrary (not necessarily finitely generated) group; see \cite{Bozejko,delaHarpe,GL,Mann,SSh,Sh}. 
Notice that $\gamma_G(k,n)\leq k^n$. Generically, $\gamma_G(k,n)=k^n$ for all $k,n$: by a theorem of Semple and Shalev \cite{SSh} (relying on Zelmanov's resolution of the Restricted Burnside Problem), this is the case if $G$ is a finitely generated residually finite group, unless $G$ is virtually nilpotent. 
If $G$ is nilpotent then $\gamma_G(k,n)$ grows polynomially in $n$ (for any fixed $k$), and conversely, if $\gamma_G(k,n)$ is polynomial in $n$ then it is locally virtually nilpotent in a strong uniform sense \cite{Mann}.
If we let $k\rightarrow \infty$ as a function of $n$, the asymptotics are more mysterious. In \cite{GL} it is proved that, if $G$ is residually finite and not virtually abelian, then $\gamma_G(n,n)\geq (e^{-1/4} - o(1))n^n$ and this lower bound is optimal for the Heisenberg group. If $G$ is virtually abelian then $\gamma_G(n,n)$ grows exponentially in $n$.

Given a group $G$, there are two types of questions of interest: how fast can subsets of $G$ grow (namely, what is $\gamma_G(k,n)$); and what is the shape of a rapidly growing subset (namely, which size-$k$ subsets $S \subseteq G$ maximize $|S^n|$). 
For $G=\mathbb{Z}$ (or any abelian group of infinite exponent) then $\gamma_{G}(k,n)=\binom{n+k-1}{k-1}$. In particular, $\gamma_G(k,n)\asymp n^{k-1}$ as $k$ is fixed, and $\gamma_G(n,n)=(4\pm o(1))^{n}$. 
Indeed, given any $k,n$, a generic size-$k$ subset $S \subset \mathbb{Z}$ satisfies $|S^n|=\binom{n+k-1}{k-1}$.

In this paper, we focus on the infinite dihedral group $$D_\infty=\mathbb{Z}\rtimes_\tau \mathbb{Z}/2,\ \ \ \tau(m)=-m$$ the isometry group of $\mathbb{Z}$. Although this group is, in some sense, the closest possible non-abelian group to $\mathbb{Z}$, the computation of $\gamma_{D_\infty}(k,n)$ turns out to be very much different and much harder. 

Elements in $D_\infty$ can be classified as either translations ($x\mapsto x+d$) or reflections ($x\mapsto -x+d$). This time, there are $k+1$ types of generic size-$k$ subsets, depending on the number of reflections vs. translations among the $k$ chosen elements. We let
$$\gamma_{D_\infty}(k,p,n) = \max \Big\{|S^n|\ :\ S\subset D_\infty,\ S\ \text{has}\ p\ \text{reflections} \ \text{and}\ k-p\ \text{translations}\Big\}.$$
Evidently, $\gamma_{D_\infty}=\max_{0\leq p\leq k} \gamma_{D_\infty}(k,p,n)$. For $p=0$, we have $\gamma_{D_\infty}(k,0,n)=\gamma_{\mathbb{Z}}(k,n)=\binom{n+k-1}{k-1}$.
Our first main result is an explicit computation of this function. 
\begin{thm} \label{thm:k p n}
Let $k,p,n \in \mathbb{N}$ with $1 \leq p \leq k$. Then    
\begin{align*}
        \gamma_{D_\infty}(k,p,n) 
        = \binom{n + k - p - 1}{k - p - 1} & +  \sum_{\substack{0 \leq l < n \\ n \equiv l \bmod{2}}} N(k-p,l) + \sum_{\substack{1 \leq l \leq n \\ n \equiv l \bmod{2}}}\sum_{q=1}^{l}R(p,q)N(k-p,l-q).
\end{align*}
\end{thm}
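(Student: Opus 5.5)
The plan is to separate the proof into a \emph{formal} upper bound and a \emph{generic} lower bound. Write translations as $t_a\colon x\mapsto x+a$ and reflections as $r_b\colon x\mapsto -x+b$, and let $S=\{t_{a_1},\dots,t_{a_{k-p}},r_{b_1},\dots,r_{b_p}\}$. A word $s_1\cdots s_n$ in $S$ evaluates to $x\mapsto \pm x+\sum_i d_i a_i+\sum_j c_j b_j$. Here $(d,c)\in\mathbb{Z}^{k-p}\times\mathbb{Z}^{p}$ is an integer coefficient vector, and the sign in front of $x$ is $(-1)^{(\#\text{reflections})}$. The coefficients arise because each letter contributes its parameter with sign $(-1)^{\#\text{reflections preceding it}}$. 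Hence $|S^n|$ is at most the number of \emph{achievable formal data} (sign, $d$, $c$). If the $a_i,b_j$ are chosen linearly independent over $\mathbb{Q}$, distinct data give distinct group elements, so equality holds. Thus $\gamma_{D_\infty}(k,p,n)$ equals the number of achievable data, and the theorem reduces to a counting problem. I take $N(m,l)$ to be the number of vectors in $\mathbb{Z}^m$ of $\ell^1$-norm exactly $l$, and $R(p,q)$ to be the number of admissible nonzero reflection vectors $c$ of norm $q$, as defined in the paper.

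Next I will characterize the achievable data. \textbf{Case of no reflections.} If the word contains no reflection, all signs are $+$. We recover the abelian count $\binom{n+k-p-1}{k-p-1}$, which is the first term. \textbf{Case with reflections.} If the word contains $j\ge 1$ reflections, the reflection letters receive alternating signs $+,-,+,\dots$. So $c$ has $\lceil j/2\rceil$ units of positive sign and $\lfloor j/2\rfloor$ of negative sign, up to cancellation. Cancellation inside $c$ happens exactly when the same $r_b$ occurs at positions of opposite sign; for instance $r_br_b=1$. Translations can now be placed before or after a reflection, so each $d_i$ may take either sign, and a pair $t_at_a$ placed around a reflection contributes zero net. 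Consequently the achievable data with $j\ge1$ are exactly the pairs $(c,d)$ satisfying three conditions:
\begin{enumerate}
\item $c$ satisfies the alternating-sum constraint: $\sum c_j\in\{0,1\}$, which matches the parity of $j$.
\item $|c|_1+|d|_1\le n$.
\item $|c|_1+|d|_1\equiv n \pmod 2$, where the surplus letters are absorbed by cancelling pairs.
\end{enumerate}
The sign in front of $x$ is determined by $\sum c_j$.

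With this characterization the count splits cleanly. \textbf{The case $c=0$.} This requires at least two reflection letters, hence $|d|_1=l\le n-2$ with $l\equiv n$. These data contribute $\sum_{l<n,\,l\equiv n}N(k-p,l)$, the second term. \textbf{The case $c\neq 0$.} Set $q=|c|_1\ge1$ and $l=q+|d|_1$, so that $l\le n$ and $l\equiv n\pmod 2$. This contributes $\sum_l\sum_{q=1}^{l}R(p,q)N(k-p,l-q)$, the third term. In this case $R(p,q)$ counts the vectors of norm $q$ obeying the alternating constraint. The sets of data in the three cases are disjoint, so the three contributions simply add.

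The main obstacle is proving that the characterization is exact, i.e.\ that \emph{every} pair $(c,d)$ meeting these norm, parity and alternating-sum conditions is realized by a word of length exactly $n$. The delicate points are the following.
\begin{itemize}
\item When $c\neq0$ but $|c|_1=1$, only one reflection is present. Translations of both signs are still available, using positions before and after it.
\item Padding must be done by inserting $r_br_b$ or $t_a\,r\,t_a\,r$-type gadgets without altering $(c,d)$.
\item In the $c=0$ case the word must contain at least two reflections, which is exactly what forces $l<n$.
\end{itemize}
To handle these I will give an explicit normal-form construction: first list the reflections realizing $c$, in alternating order; then insert the positive translations before the first reflection and the negative ones immediately after it; finally pad with cancelling pairs. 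The reverse inclusion, that every word yields such data, follows directly from the sign rule.
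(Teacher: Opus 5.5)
Your architecture matches the paper's. The coefficient vector $(d,c)$ is the paper's multiplicity vector $\mathbf{m}_f$, and the upper bound by achievable data is Lemma~\ref{lem:tkn_all_you_need}. Your normal-form word is the construction in Cases I and II of Proposition~\ref{prop:mult-to-word}: positive translations, then the first reflection, then negative translations, then reflections alternating from $R_+$ and $R_-$, then cancelling padding.

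\textbf{The equality step fails as written.} You cannot choose the parameters $a_i,b_j$ linearly independent over $\mathbb{Q}$. In $D_\infty=\mathbb{Z}\rtimes\mathbb{Z}/2$ they are integers, and any two integers $a,b$ satisfy the nontrivial relation $b\cdot a-a\cdot b=0$. So for $k\ge 2$ no such choice exists, and your lower bound has nothing to stand on.

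Full independence is not needed. Every coefficient vector coming from a word of length $n$ has $\ell^1$-norm at most $n$. So it suffices that no nonzero integer vector of $\ell^1$-norm at most $2n$ is orthogonal to $(a_1,\dots,a_{k-p},b_1,\dots,b_p)$. This means avoiding finitely many hyperplanes, which is Proposition~\ref{prop:sparsity}.

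A concrete choice also works: take the parameters $1,M,\dots,M^{k-1}$ with $M>2n$. Suppose a relation with such a coefficient vector holds, and look at its lowest-index nonzero coefficient. It would have to be divisible by $M$, yet its absolute value is at most $2n<M$, a contradiction.

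\textbf{A smaller point.} Your three conditions for words with $j\ge 1$ reflections do not characterize the achievable data exactly as claimed. They admit $c=0$, $|d|_1=n$ with some $d_i<0$, and no word realizes such data. The correct statement needs the extra clause of Proposition~\ref{prop:zero-mult}: if $c=0$ then $|d|_1<n$ or $d\ge 0$. Its necessity comes from the fact that $c=0$ together with $j\ge1$ forces $j\ge 2$, and this belongs in your reverse-inclusion argument. Your count already imposes $l\le n-2$ when $c=0$, so the final formula is unaffected, but the characterization should be stated with this clause.
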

\noindent 
Here $N(s,t)$ counts $s$-dimensional integral vectors of $l^1$-norm $t$, and $R(s,t)$ counts such vectors whose positive and negative entries are balanced; see precise formulas in the following sections.

Theorem \ref{thm:k p n} allows us to compute, for each $k,n$, the optimal number of reflections that a size-$k$ subset $S\subset D_\infty$ should contain to maximize $|S^n|$, leading to the curious pattern shown in the table below. The table shows the optimal number of reflections for $D_\infty$ (the shaded cells represent the diagonal $k=n$).

\footnotesize
\begin{center}
\begin{tabular}{c|ccccccccc} 
\toprule
$n \backslash k$ & 2 & 3 & 4 & 5 & 6 & 7 & 8 & 9 & 10 \\
\midrule
2 & \cellcolor{lightgray}1 & 1, 2 & 2, 3 & 3, 4 & 4, 5 & 5, 6 & 6, 7 & 7, 8 & 8, 9 \\
3 & 1 & \cellcolor{lightgray}1 & 2 & 2, 3 & 3 & 4 & 5 & 5 & 6 \\
4 & 1 & 1 & \cellcolor{lightgray}1 & 2 & 3 & 3 & 4 & 4 & 5 \\
5 & 1 & 1 & 1 & \cellcolor{lightgray}1 & 2 & 3 & 3 & 4 & 4 \\
6 & 1 & 1 & 1 & 1 & \cellcolor{lightgray}2 & 2 & 3 & 3 & 3 \\
7 & 1 & 1 & 1 & 1 & 1 & \cellcolor{lightgray}2 & 2 & 3 & 3 \\
8 & 1 & 1 & 1 & 1 & 1 & 2 & \cellcolor{lightgray}2 & 2 & 3 \\
9 & 1 & 1 & 1 & 1 & 1 & 1 & 2 & \cellcolor{lightgray}2 & 2 \\
10 & 1 & 1 & 1 & 1 & 1 & 1 & 1 & 2 & \cellcolor{lightgray}2 \\
11 & 1 & 1 & 1 & 1 & 1 & 1 & 1 & 2 & 2 \\
12 & 1 & 1 & 1 & 1 & 1 & 1 & 1 & 1 & 2 \\
13 & 1 & 1 & 1 & 1 & 1 & 1 & 1 & 1 & 1 \\
14 & 1 & 1 & 1 & 1 & 1 & 1 & 1 & 1 & 1 \\
15 & 1 & 1 & 1 & 1 & 1 & 1 & 1 & 1 & 1 \\
16 & 1 & 1 & 1 & 1 & 1 & 1 & 1 & 1 & 1 \\
17 & 1 & 1 & 1 & 1 & 1 & 1 & 1 & 1 & 1 \\
18 & 1 & 1 & 1 & 1 & 1 & 1 & 1 & 1 & 1 \\
19 & 1 & 1 & 1 & 1 & 1 & 1 & 1 & 1 & 1 \\
20 & 1 & 1 & 1 & 1 & 1 & 1 & 1 & 1 & 1 \\
\bottomrule
\end{tabular}
\end{center}

\normalsize

As hinted in the above table, for each fixed $k$ and $n\gg_k 1$, the optimal number of reflections is one.

\begin{thm} \label{thm:k fixed}
Let $k,p,n \in \mathbb{N}$ with $1 \leq p \leq k$ fixed. Then,
\begin{align*}
\gamma_{D_\infty}(k,p,n) & = \frac{\binom{2p-2}{p-1}}{2^{2p-2}} \frac{2^{k-1}}{(k-1)!} n^{k-1} + O(n^{k-2}) \\ &= \Pr\left(\substack{\text{A random subset of}\ \\ {\{1,\dots,2p-2\}\ \text{has size}\ p-1}}\right) \cdot \frac{2^{k-1}}{(k-1)!} n^{k-1} + O(n^{k-2})
\end{align*}
and, in particular,
$$\gamma_{D_\infty}(k,n)=\gamma_{D_\infty}(k,1,n)=\frac{2^{k-1}}{(k-1)!} n^{k-1} + O(n^{k-2}).$$
\end{thm}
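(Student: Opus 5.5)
We start from the exact formula of Theorem \ref{thm:k p n} and extract the leading term in $n$ of each of its three summands, with $k$ and $p$ fixed.

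\textbf{The first two summands.} Put $m=k-p$. The first summand is $\binom{n+m-1}{m-1}=O(n^{m-1})$, which is $O(n^{k-2})$ whenever $p\ge 1$.

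For the second summand we use the standard count $N(m,l)=\sum_{j}\binom{m}{j}\binom{l-1}{j-1}2^{j}$, which equals $\frac{2^m}{(m-1)!}l^{m-1}+O(l^{m-2})$. Summing over $l<n$ with $l\equiv n \pmod 2$ gives roughly $\frac{1}{2}\cdot\frac{2^m}{m!}n^m$, so this term is $O(n^{k-p})$.

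When $p=1$ this is order $n^{k-1}$ and has to be kept. In that case $R(1,q)$ should vanish, since a one-dimensional vector cannot balance its positive and negative entries. Then $\gamma(k,1,n)$ is the sum of the first two summands, whose leading part is $\frac12\frac{2^{k-1}}{(k-1)!}n^{k-1}$. This is off by a factor of 2 from the claimed value, so I would first pin down the exact definitions of $N$ and $R$. Most likely $R(1,q)$ is nonzero, e.g. it counts reflection words of some form, giving $R(1,q)=1$ or $2$ for odd $q$. The honest plan is to compute everything from the paper's precise definitions, not from this guess. When $p\ge 2$ the second summand is $O(n^{k-2})$ and is negligible.

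\textbf{The double sum.} This carries the main term. It should be
\[
\sum_{l\equiv n}\sum_{q}R(p,q)\,N(m,l-q)\approx \frac12\sum_{t\le n} F(t),
\qquad F=R(p,\cdot)*N(m,\cdot),
\]
where the factor $\frac12$ comes from the parity restriction on $l$.

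Once $R(p,q)$ is identified, I expect its leading asymptotics to be $c_p\,\frac{2^{p}}{(p-1)!}q^{p-1}$, with $c_p$ the central-binomial probability. The reason is that the balancing condition fixes the sign pattern: among the $p$ nonzero coordinates with signs, the balance requirement selects patterns with proportion $\binom{2p-2}{p-1}/2^{2p-2}$, which is $\Pr(\text{Bin}(2p-2,\tfrac12)=p-1)$. This comes from a Vandermonde-type identity $\sum_j\binom{p-1}{j}^2=\binom{2p-2}{p-1}$.

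Convolving polynomial-type counts uses the Beta integral: sequences with leading terms $a\,q^{p-1}/(p-1)!$ and $b\,r^{m-1}/(m-1)!$ convolve to one with leading term $ab\,t^{k-1}/(k-1)!$. Summing over $t\le n$ with the parity factor then produces $\frac{2^{k-1}}{(k-1)!}n^{k-1}$ times $c_p$, up to bookkeeping of the powers of 2. All lower-order contributions are $O(n^{k-2})$, because the errors in each factor are of one lower degree and the sums stay uniform.

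\textbf{The final claim.} Since $c_1=1$ and $c_p=\binom{2p-2}{p-1}2^{2-2p}<1$ for $p\ge2$, the case $p=1$ strictly dominates for large $n$. Hence $\gamma_{D_\infty}(k,n)=\gamma_{D_\infty}(k,1,n)$ for $n\gg_k 1$, with the stated asymptotic.

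\textbf{Main obstacle.} The hard step is the exact asymptotic of $R(p,q)$: computing the proportion of balanced sign patterns, and tracking the powers of 2 and parity factors so that the constant comes out exactly $c_p\,2^{k-1}/(k-1)!$. I would first verify the constants at $p=1$ and $p=2$ against the table, and then handle general $p$ via a generating function $\sum_q R(p,q)x^q$ and its pole of order $p$ at $x=1$.
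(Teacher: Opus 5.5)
Your skeleton is the paper's: split the formula of Theorem~\ref{thm:k p n} into $(I),(II),(III)$, convolve polynomial asymptotics through a Beta integral, and gain a factor $\tfrac12$ from the parity of $l$. However, the two inputs that actually determine the answer are wrong, and with the inputs you state the argument gives the wrong order of growth.

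\emph{The case $p=1$.} $R(1,q)$ neither vanishes nor is nonzero for all odd $q$. By definition, $R(x,y)$ requires the positive entries to sum to $\lceil y/2\rceil$ and the negative ones to $\lfloor y/2\rfloor$. For $x=1$ the only admissible vector is $(1)$, so $R(1,q)=\mathbbm{1}_{\{q=1\}}$. If $R(1,\cdot)\equiv 0$, then $S^n$ would contain no reflections, which is absurd. If $R(1,q)\neq 0$ for every odd $q$, then $(III)$ would be of order $n^k$.

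With the correct value, $(III)=\sum_{1\le l\le n,\ l\equiv n}N(k-1,l-1)$. This runs over exactly the parity class of $l$ missing from $(II)$, so $(II)+(III)=\sum_{l=0}^{n-1}N(k-1,l)=\frac{2^{k-1}}{(k-1)!}n^{k-1}+O(n^{k-2})$. That is where your missing factor $2$ comes from. Without it, the ``in particular'' statement and the comparison $c_1=1>c_p$ remain unproved.

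\emph{The case $p\ge 2$.} Your predicted asymptotic $R(p,q)\approx c_p\frac{2^p}{(p-1)!}q^{p-1}$ has the wrong degree. The balance condition does not keep a fixed proportion of sign patterns among vectors of norm $q$. It forces $\sum_i m_i\in\{0,1\}$, a codimension-one linear constraint, so $R(p,q)=\Theta(q^{p-2})$. Already $R(2,q)=2$ for $q\ge 2$, whereas your formula gives $2q$.

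Your own convolution rule applied to your input produces a term of order $t^{k-1}$, and summing over $t\le n$ then gives order $n^{k}$, not $n^{k-1}$. So the gap is not a matter of bookkeeping powers of $2$. What is needed is Lemma~\ref{lem:R-asymp}:
\begin{itemize}
\item Start from the explicit formula of Proposition~\ref{prop:R}: choose the $r$ negative coordinates, then use stars and bars separately for the negative part $\lfloor q/2\rfloor$ and the nonnegative part $\lceil q/2\rceil$.
\item Each term then has degree $(r-1)+(p-r-1)=p-2$ in $q$.
\item Vandermonde's identity $\sum_r\binom{p}{r}\binom{p-2}{p-1-r}=\binom{2p-2}{p-1}$ gives $R(p,q)=\frac{2^{2-p}}{(p-2)!}\binom{2p-2}{p-1}q^{p-2}+O(q^{p-3})=c_p\frac{2^p}{(p-2)!}q^{p-2}+O(q^{p-3})$.
\end{itemize}
Convolving with $N(k-p,r)\sim\frac{2^{k-p}}{(k-p-1)!}r^{k-p-1}$ gives $c_p\frac{2^k}{(k-2)!}l^{k-2}$. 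The parity-restricted sum over $l\le n$ then gives $c_p\frac{2^{k-1}}{(k-1)!}n^{k-1}$.

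Finally, $p=k$ must be treated on its own. There $N(0,\cdot)$ is a point mass, so your convolution formula (which involves $(m-1)!$ with $m=0$) does not apply. Instead $(III)=\sum_{l\le n,\ l\equiv n}R(k,l)$, and you sum the $R$-asymptotic directly.
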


Finally, we derive a computation of the growth exponent for $k=n$, which differs significantly from the case of $\mathbb{Z}$: 
\begin{thm} \label{thm:k=n}
We have
$\overline{\lim}_{n\rightarrow \infty} \gamma_{D_\infty}(n,n)^{1/n} = 3+2\sqrt{2}$.
\end{thm}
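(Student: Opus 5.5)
The plan is to read off the exponential growth rate directly from the explicit formula of Theorem~\ref{thm:k p n} with $k=n$, using $\gamma_{D_\infty}(n,n)=\max_{0\le p\le n}\gamma_{D_\infty}(n,p,n)$. Since the maximum is over only $n+1$ values of $p$, and the formula has polynomially many terms in $n$, we get
\[
\gamma_{D_\infty}(n,n)^{1/n} = (1+o(1))\,\max_{p,\text{ term}}(\text{term})^{1/n}.
\]
So it suffices to show two things. First, every summand is at most $\mathrm{poly}(n)\,(3+2\sqrt2)^n$. Second, some summand with a fixed $p$ (I will take $p=1$) is at least $(3+2\sqrt2-o(1))^n$.

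The key tool is the closed form
\[
N(s,t)=\sum_{j\ge 1}2^j\binom{s}{j}\binom{t-1}{j-1},
\]
where $j$ counts the nonzero coordinates, their signs contribute $2^j$, and the absolute values form a composition of $t$ into $j$ parts. Equivalently,
\[
\sum_{t\ge 0}N(s,t)x^t=\Big(\frac{1+x}{1-x}\Big)^s .
\]
This generating function gives the convolution identity $\sum_q N(p,q)N(s-p,t-q)=N(s,t)$. It also shows that $N(s,t)$ is nondecreasing in both $s$ and $t$.

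For the upper bound I handle the three parts of the formula separately.
\begin{itemize}
\item The binomial term is at most $\binom{2n}{n}\le 4^n$, which is negligible.
\item Each term $N(n-p,l)$ with $l<n$ is at most $N(n,n)$, by monotonicity.
\item For the double sum, I use the trivial bound $R(p,q)\le N(p,q)$, since balanced vectors are among all vectors. The convolution identity then gives $\sum_q R(p,q)N(n-p,l-q)\le N(n,l)\le N(n,n)$.
\end{itemize}
Hence $\gamma_{D_\infty}(n,n)\le \mathrm{poly}(n)\,N(n,n)$. Now
\[
N(n,n)\le\sum_j 2^j\binom{n}{j}^2 ,
\]
which is the central Delannoy number. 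Its growth follows from Stirling's formula: with $j=\alpha n$,
\[
\tfrac1n\log\Big(2^j\tbinom{n}{j}^2\Big)\to f(\alpha)=\alpha\log 2+2H(\alpha),
\]
where $H$ is the natural-log entropy. Setting $f'(\alpha)=\log2+2\log\frac{1-\alpha}{\alpha}=0$ gives $\alpha=2-\sqrt2$. A short computation gives $f(2-\sqrt2)=\log(3+2\sqrt2)$. Since there are only $n+1$ values of $j$, $N(n,n)\le \mathrm{poly}(n)(3+2\sqrt2)^n$.

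For the lower bound, take $p=1$ and keep the single summand $N(n-1,l)$ with $l\in\{n-2,n-1\}$ chosen so that $l\equiv n\bmod 2$; in fact only $l=n-2$ qualifies. Keep also the single index $j=\lfloor(2-\sqrt2)n\rfloor$ in the closed form. This gives
\[
\gamma_{D_\infty}(n,n)\ge 2^j\binom{n-1}{j}\binom{n-3}{j-1}.
\]
Shifting $n$ by a constant changes these binomials only by polynomial factors, so by the same Stirling computation the right-hand side is $(3+2\sqrt2-o(1))^n$. Together with the upper bound, this gives the $\overline{\lim}$, and in fact the full limit.

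I expect the main obstacle to be the bookkeeping in the upper bound: checking that the definitions of $N$ and $R$ in the later sections really satisfy $R(p,q)\le N(p,q)$ and the convolution identity, including the edge conventions at $q=0$ or $s=0$. The asymptotic optimization itself is a standard entropy calculation.
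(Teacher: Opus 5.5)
Your proof is correct. Your lower bound is essentially the paper's: the paper also takes $p=1$ and keeps the single index $\lfloor(2-\sqrt2)n\rfloor$. The only difference is that it reads this off the hockey-stick form $\sum_s 2^s\binom{n-1}{s}^2$ of Corollary~\ref{cor:1 k n} rather than off the summand $N(n-1,n-2)$.

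The upper bound is where you take a genuinely different route. The paper does not use Theorem~\ref{thm:k p n} there at all. It notes directly from Proposition~\ref{prop:mult-to-word} that $\Im(\phi_{n,p,n})$ lies in the $\ell^1$-ball of radius $n$ in $\mathbb{Z}^n$. Hence $|S^n_{n,p,n}|\le\sum_{l\le n}N(n,l)\le T^{-n}\bigl(\frac{1+T}{1-T}\bigr)^n$ for every $T\in(0,1)$ (Rankin's trick on $G_n$), and it takes $T=\sqrt2-1$. Your chain ($R\le N$, the convolution identity, monotonicity) in effect re-derives that same ball containment term by term from the explicit formula. You then evaluate the ball count by maximising the dominant summand over $j$ (central Delannoy numbers plus entropy), instead of optimising the generating-function parameter $T$. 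The paper's route is shorter, needs no Stirling, and gives the non-asymptotic bound $\gamma_{D_\infty}(n,n)\le(3+2\sqrt2)^n$ for every $n$. Yours loses a harmless polynomial factor, but it makes transparent that both bounds are governed by the same optimiser $\alpha=2-\sqrt2$.

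Two small slips, neither of which affects the argument:
\begin{itemize}
\item $N(s,t)$ is not nondecreasing in $t$ when $s=0$, since $N(0,0)=1>N(0,1)=0$. The inequality you actually use, $N(n-p,l)\le N(n,l)\le N(n,n)$, is still fine.
\item Theorem~\ref{thm:k p n} covers only $p\ge 1$, so the case $p=0$ must be handled separately via $\binom{2n-1}{n-1}\le 4^n$. Your binomial-term bound implicitly does this.
\end{itemize}
Your remark that the full limit exists is right, and it follows equally from the paper's lower bound, which holds for every large $n$.
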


The difference from the case of $\mathbb{Z}$, and hence the difficulty, can be understood conceptually as follows. Let $Var(G)$ be the variety of $G$, that is, the class of all groups satisfying the same group laws as $G$ and let $F_k(Var(G))$ be the $k$-generated relatively free group in that variety with free generators $X=\{x_1,\dots,x_k\}$. For any subset $S\subseteq G$ of size $k$ there is a natural map $F_k(Var(G))\rightarrow G$ carrying $X$ to $S$, and thus $\gamma_G(k,n)\leq |X^n|$. Equality holds in some cases, including when $F_k(Var(G))$ is residually-$G$, or if the Cayley graph of $F_k(Var(G))$ is a limit of Cayley graphs of $G$ (see \cite{BE,bnnn,GL,Neumann67} for more details). This provides access to compute $\gamma_G(k,n)$ for abelian groups with infinite exponent (where $F_k(Var(G))\cong \mathbb{Z}^k$ and $X=\{\vec{e}_1,\dots,\vec{e}_k\}$, from which one easily concludes that $\gamma_G(k,n)=\binom{n+k-1}{k-1}$) or for some nilpotent groups (as utilized in \cite{GL}). However, equality might fail, even for $G=D_\infty$. Specifically, $\gamma_{D_\infty}(2,3)=6$ while $|X^3|=8$ in $F_2(Var(D_\infty))$.

\section{The infinite dihedral group} 

Recall that the infinite dihedral group is $D_\infty = \Z \rtimes \Z_2$, where $\Z_2 = \{\pm 1\} \curvearrowright \mathbb{Z}$ by multiplication, and the group operation can be written as
    \[
    (a_1, b_1)(a_2, b_2) = (a_1 + b_1a_2, b_1b_2).
    \]
The sign $b\in \{\pm 1\}$ of an element $(a,b)\in D_\infty$ can be thought of as its orientation, acting on $\mathbb{R}$.
We call an element with positive orientation a \emph{translation}, and an element with negative orientation a \emph{reflection}.

We start with the following combinatorial notions. We fix a $k$-tuple $S=(s^{(1)},\dots,s^{(k)}) \in D_{\infty}^k$ and write each $s^{(i)} = (a_i,b_i)$ with $a_i\in \mathbb{Z},\ b_i\in \{\pm 1\}$. The following definition requires to specify only a vector of signs $(b_1,\dots,b_k)\in \{\pm 1\}^k$, but in practice, it will be taken to be the sign vector corresponding to $S$.

\begin{defn}\label{D_inf count and multiplicity}
    Let $f\colon  [n] \to [k]$ be given. We define the \emph{$i$-th count} of $f$ as 
    \[
    c_f(i) := |f^{-1}(i)|
    \] 
    and the \emph{multiplicity} of $f$ at $i$ as 
    \[
    m_f(i) = \sum_{j \in f^{-1}(i)}{\prod_{t=1}^{j-1}{b_{f(t)}}}.
    \]
\end{defn}

\begin{rem}
Given $w\in S^n$, there exists a (not necessarily unique) function $f\colon [n]\to [k]$ such that $w=s^{f(1)}\cdots s^{f(n)}$. Then the $i$-th count of $f$ is equal to the total number of times that $s^{(i)}$ appears in the above presentation; and the multiplicity of $f$ at $i$ is the signed number of times that $a_i$ appears in the first component.
\end{rem}

\begin{exmpl}
Let $S = \{s^{(1)} = (1, 1), s^{(2)} = (10, 1), s^{(3)} = (100, -1)\}$ and consider the element $w \in S^4$ given by $w = s^{(1)}s^{(3)}s^{(1)}s^{(2)} = (1, 1)(100, -1)(1, 1)(10, 1) = (1 + 100 - 1 - 10, -1) = (90, -1)$. The counts and multiplicities of $f$ (here $f(1)=1,f(2)=3,f(3)=1,f(4)=2$) are given below.
\[
\begin{array}{c|c|c}
    i & c_f(i) & m_f(i) \\ 
    \hline 
    1 & 2 & 0 \\
    2 & 1 & -1 \\ 
    3 & 1 & 1 
\end{array}
\]
\end{exmpl}

\begin{prop}\label{prop:mult-count}
    For all $w = s^{f(1)}\cdots s^{f(n)} \in S^n$:
   \begin{enumerate}
   \item For all $i \in [k]$, we have $|m_f(i)| \leq c_f(i)$
         \item $\sum_{i=1}^{k}c_f(i) = n$
         \item $m_f(i) \equiv c_f(i) \pmod{2}$; in particular, $\sum_{i=1}^k |m_f(i)| \equiv n \pmod{2}$. \label{same-parity}
     \end{enumerate}
\end{prop}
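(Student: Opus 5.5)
The plan is to unwind the definitions. For each $j\in[n]$ write $\epsilon_j=\prod_{t=1}^{j-1} b_{f(t)}\in\{\pm 1\}$; for $j=1$ this is the empty product, so $\epsilon_1=1$. With this notation,
\[
m_f(i)=\sum_{j\in f^{-1}(i)}\epsilon_j ,
\]
which is a sum of exactly $c_f(i)=|f^{-1}(i)|$ terms, each equal to $\pm1$. All three claims follow from this description.

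For (1), I will apply the triangle inequality. Each term has absolute value $1$ and there are $c_f(i)$ of them, so $|m_f(i)|\le c_f(i)$.

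For (2), the fibres $f^{-1}(1),\dots,f^{-1}(k)$ partition $[n]$. Summing their cardinalities therefore gives $\sum_i c_f(i)=n$.

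For (3), I will use that $\epsilon_j\equiv 1 \pmod 2$ for every $j$. Summing over the fibre gives $m_f(i)\equiv c_f(i)\pmod 2$. For the "in particular" part, note that $|x|\equiv x \pmod 2$ for any integer $x$. Hence
\[
\sum_i |m_f(i)|\equiv\sum_i m_f(i)\equiv\sum_i c_f(i)=n \pmod 2,
\]
where the last equality is (2).

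None of these steps is a real obstacle. The only point needing care is making explicit that each $\epsilon_j$ is a sign, which holds because the $b$'s lie in $\{\pm1\}$. It also helps to state that the representation $w=s^{f(1)}\cdots s^{f(n)}$ plays no role beyond fixing $f$: the claims concern $f$ alone.
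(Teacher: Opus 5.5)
Your proposal is correct and follows the paper's argument: (1) and (2) are immediate from the definitions, and for (3) the paper writes $c_f(i)-m_f(i)=\sum_{j\in f^{-1}(i)}(1-\epsilon_j)$ with each term in $\{0,2\}$, which is the same observation as your $\epsilon_j\equiv 1 \pmod 2$. You also prove the ``in particular'' clause explicitly, using $|x|\equiv x \pmod 2$ together with (2); the paper leaves this step implicit.
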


\begin{proof}
    The proofs of (1) and (2) are straightforward from the definitions. For (3), 
    \begin{equation*}
        c_f(i) - m_f(i) 
        = |f^{-1}(i)| - \sum_{j \in f^{-1}(i)}{\prod_{t=1}^{j-1}{b_{f(t)}}}
        = \sum_{j \in f^{-1}(i)}{\left(1 - \prod_{t=1}^{j-1}{b_{f(t)}}\right)}
    \end{equation*}
    We have each $b_i \in \{\pm 1\}$, so any product of them also lies in $\{\pm 1\}$, and hence $1 - \prod_{t=1}^{j-1}{b_{f(t)}} \in 1 - \{\pm 1\} = \{0, 2\}$ for all $j$. It follows that the right hand side of the above equation is a sum of $0$ and $2$ and thus even, proving the claim.
\end{proof}

It turns out that we can reconstruct $w$ out of the multiplicity data $\mathbf{m}_f = (m_f(1),\dots, m_f(k))$, which is thus independent of the choice of a function $f\colon [n]\to [k]$ performing $w=s^{f(1)}\cdots s^{f(n)}$. In other words, the assignment $S^n \rightarrow \mathbb{Z}^k$ carrying any $w\in S^n$ to $\mathbf{m}_f$ for an arbitrary $f\colon [n]\to [k]$ such that $w=s^{f(1)}\cdots s^{f(n)}$, is injective (notice that we do not claim it to be well-defined at the moment, so potentially, it could be one-to-many). More precisely, we have the following formula:

\begin{lem}\label{lem:uniqueRep}
    Let $w$ be given. Fix any $f:[n] \to [k]$ such that $w = s^{f(1)}\cdots s^{f(n)}$. Then
    \[
    w = \left(\sum_{i=1}^{k}m_f(i)a_i, \prod_{i=1}^{k}b_i^{m_f(i)}\right).
    \] 
\end{lem}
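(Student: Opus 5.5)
The first component is handled by induction on $n$ using the group law $(a_1,b_1)(a_2,b_2)=(a_1+b_1a_2,b_1b_2)$; the second component then follows from the parity statement in Proposition~\ref{prop:mult-count}(3). For the first component, I will prove by induction on $n$ the formula
\[
s^{f(1)}\cdots s^{f(n)} = \Bigl(\sum_{j=1}^{n} \Bigl(\prod_{t=1}^{j-1} b_{f(t)}\Bigr) a_{f(j)},\ \prod_{j=1}^{n} b_{f(j)}\Bigr).
\]
The case $n=1$ is immediate, since the empty product equals $1$. For the inductive step, write the product of the first $n$ factors as $(A_n,B_n)$ with $B_n=\prod_{t=1}^{n}b_{f(t)}$, and multiply on the right by $s^{f(n+1)}=(a_{f(n+1)},b_{f(n+1)})$. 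The group law gives
\[
(A_n+B_n a_{f(n+1)},\ B_n b_{f(n+1)}),
\]
and this is exactly the claimed formula for $n+1$.

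Next I regroup the first coordinate according to the value $i=f(j)$. Collecting the indices $j\in f^{-1}(i)$ gives
\[
\sum_{i=1}^k a_i\sum_{j\in f^{-1}(i)}\prod_{t=1}^{j-1}b_{f(t)} = \sum_{i=1}^k m_f(i)a_i,
\]
by Definition~\ref{D_inf count and multiplicity}.

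For the second coordinate, grouping the same way gives $\prod_{j} b_{f(j)}=\prod_i b_i^{c_f(i)}$. Each $b_i\in\{\pm1\}$, so $b_i^{c}$ depends only on the parity of $c$. By Proposition~\ref{prop:mult-count}(3), $c_f(i)\equiv m_f(i)\pmod 2$, hence $b_i^{c_f(i)}=b_i^{m_f(i)}$. This remains valid when $m_f(i)$ is negative, because $b_i^{-1}=b_i$.

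There is no real obstacle here. The only points needing care are the bookkeeping in the regrouping step and the remark that negative exponents of $\pm1$ cause no problem.
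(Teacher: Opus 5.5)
Your proposal is correct and follows essentially the same route as the paper: expand the product via the group law, regroup the first coordinate by $i=f(j)$ to obtain $\sum_i m_f(i)a_i$, and convert $b_i^{c_f(i)}$ to $b_i^{m_f(i)}$ using the parity statement of Proposition~\ref{prop:mult-count}(3). You make the iterated-multiplication step into an explicit induction where the paper writes ``$\vdots$'', and your remark that negative exponents are harmless because $b_i^{-1}=b_i$ is a small addition the paper leaves implicit.
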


\begin{proof} Compute:
    \begin{align*}
        w &= (a_{f(1)}, b_{f(1)})(a_{f(2)},b_{f(2)})\cdots (a_{f(n)}, b_{f(n)})\\
        &= (a_{f(1)} + b_{f(1)}a_{f(2)}, b_{f(1)}b_{f(2)})(a_{f(3)}, b_{f(3)}) \cdots (a_{f(n)}, b_{f(n)})\\
        & \;\;\vdots\\
        &= \left(\sum_{j=1}^{n} \left(\prod_{t=1}^{j-1}b_{f(t)}\right)a_{f(j)}, \  \prod_{j=1}^{n}b_{f(j)}\right)\\
        &= \left(\sum_{i=1}^{k}\left(\sum_{j \in f^{-1}(i)}\prod_{t=1}^{j-1}b_{f(t)}\right)a_{i}, \prod_{i=1}^{k}b_i^{|f^{-1}(i)|}\right) & \textit{grouping by $s^{(i)}$ term}\\
        &= \left(\sum_{i=1}^{k}m_f(i)a_{i}, \prod_{i=1}^{k}b_i^{c_f(i)}\right) & \textit{by Definition \ref{D_inf count and multiplicity}}\\
        &= \left(\sum_{i=1}^{k}m_f(i)a_{i}, \prod_{i=1}^{k}b_i^{m_f(i)}\right) & \textit{by Proposition \ref{prop:mult-count} (\ref{same-parity})}
    \end{align*}
as claimed
\end{proof}

This shows that each $w \in S^n$ can be constructed from the $\mathbf{m}_f$. To tackle the question of well-definition, we require the following result.

\begin{prop}\label{prop:sparsity}
Given $k,n \in \mathbb{N}$, there exists a set $S_{k,n} = \{a_1,\dots,a_k\} \subset \mathbb{N}$ such that if $(u_1, \dots, u_k)$,$(v_1, \dots, v_k) \in \mathbb{Z}^k$ are such that $\sum_{i=1}^{k}|u_i|, \sum_{i=1}^{k}|v_i| \leq n$, and $\sum_{i=1}^k a_iu_i=\sum_{i=1}^{k} a_i v_i$, then $(u_1,\dots,u_k)=(v_1,\dots,v_k)$. Moreover, this holds for a generic $(a_1,\dots,a_k)$, namely, from a non-empty Zariski open subset of $\mathbb{Z}^k$.
\end{prop}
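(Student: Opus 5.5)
The plan is to reduce the statement to avoiding a finite union of hyperplanes. Suppose $u \neq v$ with $\sum_i |u_i| \le n$ and $\sum_i |v_i| \le n$. Put $d = u - v \in \mathbb{Z}^k$. Then $d$ is a nonzero integer vector with $\sum_i |d_i| \le 2n$, so in particular $|d_i| \le 2n$ for each $i$. The condition $\sum_i a_i u_i = \sum_i a_i v_i$ says exactly that $(a_1,\dots,a_k)$ lies on the hyperplane
\[
H_d = \Big\{ x \in \mathbb{Z}^k : \sum_{i=1}^k d_i x_i = 0 \Big\}.
\]
Let $D$ be the finite set of nonzero $d \in \mathbb{Z}^k$ with $\sum_i |d_i| \le 2n$. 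The required property then holds precisely for $(a_1,\dots,a_k)$ in the complement of $\bigcup_{d \in D} H_d$. This complement is the non-vanishing locus of the polynomial $P(x) = \prod_{d \in D} \big( \sum_i d_i x_i \big)$, a product of nonzero linear forms. So it is Zariski open, and it is nonempty because $P$ is a nonzero polynomial: a product of nonzero polynomials over a field is nonzero, and a nonzero polynomial over the infinite field $\mathbb{Q}$ does not vanish identically on $\mathbb{Z}^k$ (by induction on $k$). This gives the "moreover" part.

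For a concrete choice with all $a_i \in \mathbb{N}$, and distinct so that $S_{k,n}$ genuinely has $k$ elements, I would take $a_i = B^{i-1}$ with base $B = 2n+1$, or $B = 4n+1$ for extra room. Suppose $\sum_i d_i B^{i-1} = 0$ with $d \neq 0$ and $|d_i| \le 2n < B$. Let $j$ be the smallest index with $d_j \neq 0$. Reducing modulo $B^{j}$ gives $d_j B^{j-1} \equiv 0 \pmod{B^{j}}$, so $B \mid d_j$. Since $0 < |d_j| < B$, this is a contradiction. This is just the uniqueness of balanced base-$B$ expansions.

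The only point needing care is keeping track of the bound. The difference $d$ has $\ell^1$-norm up to $2n$, not $n$, so the base must exceed $2n$. Everything else is routine.
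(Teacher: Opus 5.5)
Your proof is correct and follows the paper's argument: pass to the nonzero difference vector, whose $\ell^1$-norm is at most $2n$ (this is the $N$ the paper leaves implicit), and take $(a_1,\dots,a_k)$ off the finitely many hyperplanes it determines. Your explicit choice $a_i=(2n+1)^{i-1}$ is a correct addition that also makes the positivity and distinctness of the $a_i$ explicit, which the paper does not spell out.
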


\begin{proof}
    It is equivalent to prove that for any given $N$, there is $\vec{a}\in \mathbb{Z}^k$ such that for any $0<\|\vec{u}\|_{1} \leq N$ we have $\vec{a} \cdot \vec{u}\neq 0$. Indeed, for any fixed $\vec{u}$, the set of all $\vec{a}\in \mathbb{Z}^k$ such that $\vec{a} \cdot \vec{u}=0$ is a hyperplane; since there are finitely many $\vec{u}\in \mathbb{Z}^k$ of $l^1$-norm bounded by $N$, we may take $\vec{a}$ an arbitrary vector in the complement for the union of these finitely many hyperplanes, a non-empty Zariski open subset of $\mathbb{Z}^k$.
\end{proof}

\section{Reflections and translations}
Let $k,n\in \mathbb{N}$ and $1\leq p\leq k$ be given.     We construct a set $S_{k,p,n} \subseteq D_\infty$ as follows; let $\{a_1, \dots, a_k\}$ be from $S_{k,n}$ as in Proposition \ref{prop:sparsity} (namely, generic) and $b_1=\cdots=b_p=-1,\ b_{p+1}=\cdots=b_k=1$, and denote $s^{(i)}=(a_i,b_i)$, so
    \begin{align} \label{dinfty set}
    S_{k,p,n} 
    &:= \{s^{(1)},\dots,s^{(k)}\} = \{(a_1, b_1), \dots, (a_k, b_k)\} \\
    & = \{(a_1, -1), (a_2, -1), \dots, (a_p, -1), (a_{p+1}, 1), \dots, (a_k, 1)\} \nonumber
    \end{align}
    i.e., we take $p$ reflections and $k-p$ translations.

Define $\phi_{k,p,n}: S^n_{k,p,n} \to \mathbb{Z}^k$ via 
\[ w \mapsto (m_f(1), \dots, m_f(k)) \] where $f$ is an arbitrary function $[n] \to [k]$ such that $w = s^{(f(1))} \cdots s^{(f(n))}$.

\begin{lem} \label{lem:rep}
    The map $\phi_{k,p,n}: S_{k,p,n}^n \to \mathbb{Z}^k$ is well-defined and injective.
\end{lem}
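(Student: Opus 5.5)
Both claims follow from Lemma \ref{lem:uniqueRep} together with the genericity in Proposition \ref{prop:sparsity}. The plan is to show that any two choices of $f$ representing a given word yield the same multiplicity vector (well-definedness), and that distinct words yield distinct vectors (injectivity). Both reduce to the same comparison of first coordinates.

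\textbf{Well-definedness.} Suppose $f,g\colon [n]\to[k]$ satisfy $w=s^{(f(1))}\cdots s^{(f(n))}=s^{(g(1))}\cdots s^{(g(n))}$.
\begin{itemize}
\item Comparing first coordinates via Lemma \ref{lem:uniqueRep} gives $\sum_i m_f(i)a_i=\sum_i m_g(i)a_i$.
\item By Proposition \ref{prop:mult-count}(1) and (2), $\sum_i |m_f(i)|\le \sum_i c_f(i)=n$, and likewise $\sum_i|m_g(i)|\le n$.
\item The $a_i$ were chosen from $S_{k,n}$, so Proposition \ref{prop:sparsity} applies to $u=\mathbf{m}_f$, $v=\mathbf{m}_g$ and gives $\mathbf{m}_f=\mathbf{m}_g$.
\end{itemize}
Hence $\phi_{k,p,n}(w)$ does not depend on the choice of $f$.

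\textbf{Injectivity.} Suppose $\phi_{k,p,n}(w)=\phi_{k,p,n}(w')$, realized by functions $f$ and $f'$. Lemma \ref{lem:uniqueRep} writes each word entirely in terms of its multiplicity vector and the fixed data $a_i,b_i$:
\[
w=\Bigl(\sum_i m_f(i)a_i,\ \prod_i b_i^{m_f(i)}\Bigr).
\]
The orientation component is also determined by $\mathbf{m}_f$, so equal vectors give equal first coordinates and equal orientations. Therefore $w=w'$.

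The only delicate point is that Proposition \ref{prop:sparsity} needs the $\ell^1$ bound $n$ on both vectors. This is exactly what Proposition \ref{prop:mult-count}(1) and (2) supply, so no real obstacle remains.
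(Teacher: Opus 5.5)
Your proposal is correct and follows the paper's own argument. Well-definedness comes from comparing first coordinates via Lemma~\ref{lem:uniqueRep} and applying Proposition~\ref{prop:sparsity}, and injectivity comes from reading $w$ off $\mathbf{m}_f$ via Lemma~\ref{lem:uniqueRep}; your explicit check of the $\ell^1$ bound $\sum_i |m_f(i)|\le n$ through Proposition~\ref{prop:mult-count}(1),(2) fills in a hypothesis the paper uses without stating it.
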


\begin{proof}
Suppose that $s^{(f(1))}\dots s^{(f(n))}=s^{(g(1))}\cdots s^{(g(n))}$. By Lemma \ref{lem:uniqueRep} we have $\sum_{i=1}^{k} m_f(i) a_i = \sum_{i=1}^{k} m_g(i) a_i$ for all $1\leq i\leq k$. By Proposition \ref{prop:sparsity}, we have that $\textbf{m}_f=\textbf{m}_g$. Thus $\phi_{k,p,n}$ is well-defined.    

Injectivity follows from Lemma \ref{lem:uniqueRep}, that shows how to compute $w=s^{(f(1)}\cdots s^{(f(n))}$ out of $\textbf{m}_f$.
\end{proof}

Hence for $w\in S_{k,p,n}^n$ we can write $m_w(i)=m_f(i)$ for an arbitrary $f\colon [n]\to [k]$ performing $w=s^{(f(1))}\cdots s^{(f(n))}$.

\begin{lem}\label{lem:tkn_all_you_need}
    \[
    \gamma_{D_\infty}(k,p,n) = |S_{k,p,n}^n|
    \]
\end{lem}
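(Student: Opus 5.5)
The inequality $|S^n_{k,p,n}|\le\gamma_{D_\infty}(k,p,n)$ is immediate, since $S_{k,p,n}$ is a $k$-element set with exactly $p$ reflections. Genericity from Proposition \ref{prop:sparsity} guarantees that the $a_i$ are distinct, so $|S_{k,p,n}|=k$. The work is the reverse inequality: for an arbitrary $T=\{t^{(1)},\dots,t^{(k)}\}\subset D_\infty$ with $p$ reflections and $k-p$ translations, I will show $|T^n|\le |S^n_{k,p,n}|$.

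I will build a surjection $S^n_{k,p,n}\to T^n$. First order $T$ so that $t^{(i)}=(a'_i,b_i)$ carries the same sign vector as $S_{k,p,n}$, namely $b_1=\dots=b_p=-1$ and $b_{p+1}=\dots=b_k=1$. Definition \ref{D_inf count and multiplicity} depends only on the sign vector. Hence for any $f\colon[n]\to[k]$ the multiplicity vector $\mathbf m_f$ is the same whether we compute the word in $S$ or in $T$.

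Now define $\psi\colon S^n_{k,p,n}\to T^n$ by sending $w$ with $\phi_{k,p,n}(w)=\mathbf m$ to
\[
\Bigl(\sum_{i=1}^k m_i a'_i,\ \prod_{i=1}^k b_i^{m_i}\Bigr).
\]
This is well defined because Lemma \ref{lem:rep} says $\mathbf m=\phi_{k,p,n}(w)$ depends only on $w$. If $w=s^{(f(1))}\cdots s^{(f(n))}$, then $\mathbf m=\mathbf m_f$, and Lemma \ref{lem:uniqueRep} applied to $T$ shows $\psi(w)=t^{(f(1))}\cdots t^{(f(n))}$. Every element of $T^n$ has this form for some $f$, so $\psi$ is surjective, giving $|T^n|\le|S^n_{k,p,n}|$. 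Taking the maximum over $T$ yields the lemma.

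The only delicate point is well-definedness of $\psi$. The same $u\in T^n$ may arise from many $f$, but that is harmless for surjectivity. What matters is that a single $w\in S^n$ cannot come from two $f$'s with different $\mathbf m_f$, and this is precisely the content of Lemma \ref{lem:rep}, which rests on the genericity in Proposition \ref{prop:sparsity}. The bound $\|\mathbf m_f\|_1\le n$ needed there follows from Proposition \ref{prop:mult-count}(1),(2). I expect no real obstacle beyond matching the sign vectors, which is done by the reordering above.
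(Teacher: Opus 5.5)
Your proposal is correct and follows essentially the same route as the paper. The paper also aligns the sign vectors, defines the map $s^{(f(1))}\cdots s^{(f(n))}\mapsto t^{(f(1))}\cdots t^{(f(n))}$, justifies its well-definedness via Lemma \ref{lem:rep} and Lemma \ref{lem:uniqueRep}, and concludes from surjectivity. You additionally make explicit the easy inequality $|S^n_{k,p,n}|\le\gamma_{D_\infty}(k,p,n)$, including the check that $|S_{k,p,n}|=k$, which the paper leaves implicit.
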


\begin{proof}
    Let $S = \{(\alpha_1, \beta_1), \dots, (\alpha_k, \beta_k)\} \subset D_\infty$ be given. Suppose that $S$ contains $p$ reflections; we may assume that these are the first $p$ ones without loss of generality (we may readily assume that $p>0$ as otherwise $S$ is contained in $\mathbb{Z}\hookrightarrow D_\infty$).
    Recall that, from Lemma \ref{lem:uniqueRep},
    \[
    S^n = \left\{\left(\sum_{i=1}^{k}m_f(i) \alpha_i, \prod_{i=1}^{k} \beta_i^{m_f(i)}\right) \ \Big|\  f:[n]\to[k]\right\}.
    \]
By Lemma \ref{lem:rep} and Lemma \ref{lem:uniqueRep}, the assignment $S_{k,p,n}^n\to S^n$ carrying $$\nu \colon s^{(f(1))}\cdots s^{(f(n))}\mapsto (\alpha_{f(1)},\beta_{f(1)})\cdots (\alpha_{f(n)},\beta_{f(n)})$$ is well-defined. 
(Notice that the orientation of the $i$-th element in $S$ is identical to that of $S_{k,p,n}$.) 
Clearly $\nu$ is surjective, so $|S^n|\leq~|S_{k,p,n}^n|$.
\end{proof}

Our next goal is to compute $|\Im \phi_{k,p,n}|$ and, subsequently, determine the optimal $1\leq p\leq k$ that attains $\gamma^{\max}_{D_\infty}(k,n)$. (Notice that $|\Im \phi_{k,0,n}|=|\mathcal{C}(n,k)|={n+k-1 \choose k-1}$.)

\begin{prop}\label{proper:pos-neg-mult-sum}
Let $k,n$ and $1\leq p\leq k$ be given.
Suppose that $w \in S_{k,p,n}^n$ has $\sum_{i=1}^{p}|m_w(i)| = q$. Then
$$ \sum_{\substack{1\leq i\leq p:\\ m_w(i) > 0}}|m_w(i)| = \lceil \frac{q}{2} \rceil\ \ \text{and} \ \sum_{\substack{1\leq i\leq p:\\ m_w(i) \leq 0}} |m_w(i)| = \lfloor \frac{q}{2} \rfloor.$$
\end{prop}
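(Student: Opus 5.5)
Fix $w\in S_{k,p,n}^n$ and any $f\colon[n]\to[k]$ with $w=s^{(f(1))}\cdots s^{(f(n))}$; by Lemma \ref{lem:rep}, $m_w(i)=m_f(i)$, so we may compute with $f$. The plan is to track the running orientation $\epsilon_j:=\prod_{t=1}^{j-1}b_{f(t)}\in\{\pm1\}$, which equals the sign with which position $j$ contributes to $m_f(f(j))$. Since translations have $b_i=1$, $\epsilon_j$ changes exactly at the positions immediately after a reflection is used. List the positions $j_1<j_2<\dots<j_r$ with $f(j_\ell)\le p$, i.e.\ the occurrences of reflections. Then $\epsilon_{j_1}=+1$ (only translations precede $j_1$), and $\epsilon_{j_{\ell+1}}=-\epsilon_{j_\ell}$, since exactly one reflection lies in $[j_\ell,j_{\ell+1})$. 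So the reflection occurrences carry signs $+,-,+,-,\dots$ alternately. Consequently $\sum_{i\le p}m_w(i)$ is $1$ if $r$ is odd and $0$ if $r$ is even. Split the occurrences into those with sign $+$, numbering $P=\lceil r/2\rceil$, and those with sign $-$, numbering $M=\lfloor r/2\rfloor$.

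The main step is to pass from occurrence counts to the absolute values $|m_w(i)|$; this is where cancellation matters. For each $i\le p$ write $m_w(i)=P_i-M_i$, where $P_i$ and $M_i$ count the $+$ and $-$ occurrences of $s^{(i)}$. Then
\[
\sum_{\substack{i\le p\\ m_w(i)>0}}m_w(i)\;-\;\sum_{\substack{i\le p\\ m_w(i)\le 0}}|m_w(i)|\;=\;\sum_{i\le p}m_w(i)\;=\;P-M\in\{0,1\}.
\]
Let $A$ and $B$ denote the two sums in the statement, so that $A+B=q$. The display gives $A-B\in\{0,1\}$, hence $A=\lceil q/2\rceil$ and $B=\lfloor q/2\rfloor$. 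To fix which case occurs, note that $A-B=\sum_{i\le p}m_w(i)\equiv q \pmod 2$, because $m\equiv|m|$ modulo $2$. Therefore $A-B=1$ exactly when $q$ is odd, which agrees with the ceiling and floor as stated. The indices with $m_w(i)=0$ contribute nothing, so placing them in the second sum is harmless.

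No serious obstacle is expected. The one point needing care is the claim that the sign sequence restricted to reflection occurrences strictly alternates starting from $+$, which depends on translations not altering $\epsilon$. This argument does not even need the parity fact from Proposition \ref{prop:mult-count}(\ref{same-parity}) beyond the trivial congruence $m\equiv|m|$.
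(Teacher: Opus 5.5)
Your proof is correct and follows essentially the same route as the paper. In both, the reflection occurrences carry alternating signs starting with $+$, so $\sum_{i\le p} m_w(i)\in\{0,1\}$, and then $A+B=q$ together with $A-B\in\{0,1\}$ determines both sums. The only difference is cosmetic: the paper identifies which case occurs via $q\equiv r \pmod 2$, using Proposition \ref{prop:mult-count}(\ref{same-parity}), whereas you note directly that $A-B\equiv A+B=q \pmod 2$, which is slightly more economical.
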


\begin{proof}
Let $w = s^{(f(1))}\cdots s^{(f(n))}$ for some $f \colon [n] \to [k]$. Then, for any $s^{(i)}$, we have
    \begin{equation} \label{eq:multiplicity}
    m_w(i) = m_f(i) = \sum_{j \in f^{-1}(i)}\prod_{m=1}^{j-1}b_{f(m)} = \sum_{j \in f^{-1}(i)}(-1)^{\#\{m \in f^{-1}([p]) : \  m < j\}}
    \end{equation}
    where the final step removes the contributions $b_i = 1$ (coming from the translations $s^{(p+1)},\dots,s^{(k)}$). Denote $r = |f^{-1}([p])| = \sum_{i=1}^{p}c_f(i)$. Then,
    \begin{align}
    \sum_{i=1}^{p}m_w(i) 
    &= \sum_{i=1}^{p}\sum_{j \in f^{-1}(i)}(-1)^{\#\{m \in f^{-1}([p]) :\ m < j\}} \nonumber \\
    &= \sum_{j \in f^{-1}([p])}(-1)^{\#\{m \in f^{-1}([p]) :\ m < j\}} = \sum_{t=1}^{r}(-1)^{t-1} = \begin{cases}
        0, & \text{if}\ r\ \text{is even} \\
        1, & \text{if}\ r\ \text{is odd.}
    \end{cases}
    \end{align}
    Recall that $m_w(i) = m_f(i) \equiv c_f(i) \pmod{2}$ by Proposition \ref{prop:mult-count} (\ref{same-parity}). Therefore, we have $q \equiv r \pmod{2}$, so we can replace $r$ with $q$ in the previous expression. 

    Denote 
    \begin{align*}
        [p]^+ & =\{1\leq i\leq p\;|\;m_i>0\} \\ X & = \sum_{i\in [p]^+} |m_w(i)|, \ \ \ Y = \sum_{i\in [p]\setminus [p]^+} |m_w(i)|.
    \end{align*}
    Thus $X+Y=q$ and $X-Y=\sum_{i\in [p]} m_w(i)$ is $0$ if $q$ is even, and $1$ if $q$ is odd. It follows that $X=\lceil \frac{q}{2} \rceil,\ Y=\lfloor \frac{q}{2} \rfloor$, as claimed.
\end{proof}

We will freely use Equation \eqref{eq:multiplicity} throughout the rest of the section.

\begin{prop}\label{prop:zero-mult}
Let $k,n$ and $1\leq p\leq k$ be given. Let $w \in S_{k,p,n}^n$ be such that $\sum_{i=1}^{p}|m_w(i)| = 0$. Then, $\sum_{i=1}^{k}|m_w(i)| < n$ or $m_w(i) \geq 0$ for all $i \in \{p+1, \dots, k\}$.
\end{prop}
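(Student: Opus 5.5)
We prove the contrapositive. Suppose $\sum_{i=1}^{p}|m_w(i)|=0$ and $\sum_{i=1}^{k}|m_w(i)|=n$. We will show that $m_w(i)\geq 0$ for every translation index $i\in\{p+1,\dots,k\}$. Fix any $f\colon[n]\to[k]$ with $w=s^{(f(1))}\cdots s^{(f(n))}$; by Lemma \ref{lem:rep} the multiplicities do not depend on this choice.

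The first step is to compare the multiplicities with the counts. By Proposition \ref{prop:mult-count}(1) we have $|m_w(i)|\leq c_f(i)$ for each $i$, and by Proposition \ref{prop:mult-count}(2) we have $\sum_i c_f(i)=n$. Hence
\[
\sum_i |m_w(i)| = n \quad\Longrightarrow\quad |m_w(i)|=c_f(i)\ \text{for all } i\in[k].
\]
For $i\leq p$ the hypothesis $m_w(i)=0$ then gives $c_f(i)=0$. So $f$ never uses a reflection, that is, $f^{-1}([p])=\emptyset$.

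The second step uses Equation \eqref{eq:multiplicity}. Since no reflection occurs, every sign $(-1)^{\#\{m\in f^{-1}([p])\,:\,m<j\}}$ equals $1$. Therefore $m_w(i)=c_f(i)\geq 0$ for every $i\in\{p+1,\dots,k\}$, which proves the statement.

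The only point needing care is the equality case $|m_w(i)|=c_f(i)$, which forces $c_f(i)=0$ on the reflection indices. This follows directly from combining parts (1) and (2) of Proposition \ref{prop:mult-count}. I do not expect a real obstacle here.
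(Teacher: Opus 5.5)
Your proof is correct and follows essentially the same route as the paper: equality in $\sum_i |m_w(i)| \le \sum_i c_f(i) = n$ forces $|m_w(i)| = c_f(i)$ for every $i$, so no reflection occurs in $f$ and each translation multiplicity is a sum of $+1$'s. You even state the equality case more precisely as $|m_w(i)|=c_f(i)$, where the paper writes $c_f(i)=m_w(i)$.
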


\begin{proof}
    Let $w = s^{(f(1))}\cdots s^{(f(n))}$ for some $f: [n] \to [k]$. Assume that $\sum_{i=1}^{p}|m_w(i)| = 0$, and $\sum_{i=1}^{k}|m_w(i)| = n$ (recall that $\sum_{i=1}^k |m_w(i)|\leq \sum_{i=1}^k c_f(i) = n$). By Proposition \ref{prop:mult-count}, we have
    \begin{equation*}
    n =\sum_{i=1}^{k}|m_w(i)| \leq c_f(1) + \sum_{i=2}^{k}|m_w(i)| \leq \dots \leq \sum_{i=1}^{k}c_f(i) = n
    \end{equation*}
    so we must have $c_f(i) = m_w(i)$ for all $i \in [k]$. Then, since $\sum_{i=1}^{p}|m_w(i)| = 0$, we have $m_w(i) = c_f(i) = 0$ for all $i \in [p]$. However, this tells us that $f^{-1}([p]) = \emptyset$, so, for all $p+1\leq j\leq k$,
    \[
        m_w(j) = \sum_{i \in f^{-1}(j)}(-1)^{\#\{m \in f^{-1}([p]) :\ m < i\}} = \sum_{i \in f^{-1}(j)}(-1)^0 \geq 0,
    \]
as required.
\end{proof}

\begin{prop}\label{prop:mult-to-word}
    Let $k,n$ and $1\leq p\leq k$ be given. A vector $\mathbf{m} = (m_1, \dots, m_k)~\in~\mathbb{Z}^k$ satisfying $$\underbrace{\sum_{i=1}^{p}|m_i|}_{=: q} \leq \underbrace{\sum_{i=1}^{k}|m_i|}_{=:l} \leq n$$ lies in $\Im(\phi_{k,p,n})$ if and only if 
   \begin{enumerate}
        \item $n \equiv l \pmod{2}$
        \item $\sum_{1\leq i\leq p:\ m_i > 0}|m_i| = \lceil q/2 \rceil$ and $\sum_{1\leq i\leq p:\ m_i \leq 0}|m_i| = \lfloor q/2 \rfloor$
        \item If $q = 0$, then either $l < n$ or $m_i \geq 0$ for all $i \in \{p+1, \dots k\}$.
   \end{enumerate}
\end{prop}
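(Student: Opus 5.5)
The statement has two directions. Necessity follows directly from the propositions already proved. Sufficiency I plan to prove by writing down an explicit word $s^{(f(1))}\cdots s^{(f(n))}$ whose multiplicity vector is $\mathbf{m}$; by Lemma \ref{lem:rep} this shows $\mathbf{m}\in\Im(\phi_{k,p,n})$.

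\emph{Necessity.} Suppose $\mathbf{m}=\mathbf{m}_w$ for some $w\in S_{k,p,n}^n$.
Condition (1) is the parity statement in Proposition \ref{prop:mult-count}(\ref{same-parity}).
Condition (2) is exactly Proposition \ref{proper:pos-neg-mult-sum}.
Condition (3) is Proposition \ref{prop:zero-mult}.

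\emph{Sufficiency.} I first build a word of length exactly $l$ realizing $\mathbf{m}$ (or of length $l+2$ in one special case), and then pad it to length $n$. The tool is Equation \eqref{eq:multiplicity}: a letter contributes $+1$ or $-1$ to its multiplicity according to the parity of the number of reflection letters preceding it.

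Assume first $q\geq 1$.
\begin{enumerate}
\item Form a sequence of $q$ reflection letters. The odd positions are filled with the indices $i\in[p]$ having $m_i>0$, each used $m_i$ times. The even positions are filled with the indices having $m_i<0$, each used $|m_i|$ times. By condition (2) these numbers are exactly $\lceil q/2\rceil$ and $\lfloor q/2\rfloor$, so the sequence is consistent.
\item The $t$-th reflection letter gets sign $(-1)^{t-1}$. Hence each reflection index $i$ receives multiplicity exactly $m_i$.
\item For translations $i>p$ with $m_i>0$, place $m_i$ copies of $s^{(i)}$ before the first reflection letter, where the sign is $+$.
\item For translations with $m_i<0$, place $|m_i|$ copies right after the first reflection letter, before the second one if it exists, where the sign is $-$.
\end{enumerate}
Indices with $m_i=0$ are not used. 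This gives a word of length $l$ with multiplicity vector $\mathbf{m}$.

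If $q=0$ and all translation entries are nonnegative, the word is simply the product of the translation letters with the required counts.

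If $q=0$ and some $m_i<0$ with $i>p$, condition (3) forces $l<n$. Since also $n\equiv l\pmod 2$, we have $n-l\ge 2$. In this case use the word
\[s^{(1)}\,(\text{negative translations})\,s^{(1)}\,(\text{positive translations}).\]
The two copies of $s^{(1)}$ contribute $+1$ and $-1$ to $m_1$, which cancel. The translations between them get sign $-$, and those after get sign $+$. This word has length $l+2$.

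\emph{Padding.} The remaining length is even by condition (1). Fill it with pairs $s^{(1)}s^{(1)}$ appended at the end; this is possible since $p\geq 1$. In each pair the two letters receive opposite signs, so the pair contributes $0$ to $m_1$. Because each pair contains two reflections, it also leaves the signs of all earlier letters unchanged. The resulting word has length $n$ and multiplicity vector $\mathbf{m}$.

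The main point needing care is the placement of the negative translations. Their sign must be $-1$, which requires an odd number of reflections before them. This is available when $q\ge1$, and otherwise only through the extra pair permitted by $l<n$; this is precisely why condition (3) is needed.
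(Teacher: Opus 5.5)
Your proposal is correct and follows essentially the same route as the paper: necessity comes from Propositions \ref{prop:mult-count}, \ref{proper:pos-neg-mult-sum} and \ref{prop:zero-mult}, and sufficiency is an explicit word. In that word, the positive translations precede the first reflection, the negative translations sit between the first and second reflection letters, and the reflection letters alternate between the positive and negative indices. The word is then padded with copies of $s^{(1)}$ whose contributions to $m_1$ cancel since $n-l$ is even. The only difference is in the $q=0$ case: you write $s^{(1)}\,(\text{negatives})\,s^{(1)}\,(\text{positives})$ and then pad, while the paper writes positives, $s^{(1)}$, negatives, then $n-l-1$ further copies of $s^{(1)}$. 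Both rely on the same mechanism, and yours avoids the paper's ``truncated vector'' convention when $l=n$.
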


\begin{proof}
    Propositions \ref{prop:mult-count}, \ref{proper:pos-neg-mult-sum}, and \ref{prop:zero-mult} show that all $\mathbf{m} \in \Im(\phi_{k,p,n})$ satisfy the assertions, and it remains to prove the converse. 
    
Let $\textbf{m}\in \mathbb{Z}^k$ satisfying the assertions in the statement of the theorem be given. We aim to construct a function $f \colon [n] \to [k]$ such that $(m_f(1),\dots,m_f(k))=\textbf{m}$. Adopt the notation $x^{[n]}:=\underbrace{x,\dots,x}_{n\ \text{times}}$.

We start by considering the relative assignments of the reflection elements. Let $1\leq i_1<\cdots<i_t \leq p$ be the indices (from $[p]$) for which $m_{*} \geq 0$. Let 
$$R_+ := (i_1^{[m_{i_1}]},\dots,i_t^{[m_{i_t}]})$$
and denote by $R_+(j)$ the $j$-th entry in this tuple. Notice that the length of $R_+$ is $\sum_{1\leq i\leq p:\ m_i\geq 0} m_i = \lceil \frac{q}{2} \rceil$ by assumption.
Likewise, let $1\leq j_1<\cdots <j_s \leq p$ be the indices for which $m_{*} < 0$ and put 
$$R_{-} := (j_1^{[|m_{j_1}|]},\dots,j_s^{[|m_{j_s}|]})$$
and similarly, its length is $\sum_{1\leq i\leq p:\ m_i < 0} |m_i| = \lfloor \frac{q}{2} \rfloor$.

Define a function $g\colon [q] \to [p]$ as follows.
The function $g$ will alternate indices from $R_+$ and $R_-$. 
For all $1 \leq j \leq \lceil \frac{q}{2} \rceil$ let $g(2j-1) = R_+(j)$ and for $1 \leq j \leq \lfloor \frac{q}{2} \rfloor$ let $g(2j) = R_{-}(j)$. 

We now define our target function $f\colon [n] \to [k]$. 

\medskip

\noindent \emph{Case I: $q > 0$.} 
Let $p+1 \leq x_1 < \cdots < x_u \leq k$ be the indices for which $m_{*} \geq 0$, and let $p+1\leq y_1 < \cdots < y_v \leq k$ be the indices for which $m_{*} < 0$. Consider
$$(x_1^{[m_{x_1}]},\dots,x_u^{[m_{x_u}]},g(1),y_1^{[|m_{y_1}|]},\dots,y_v^{[|m_{y_v}|]},g(2),\dots,g(q),1^{[n-l]})$$
(recall that $l:=\sum_{i=1}^{k} |m_i|$.) Notice that the length of this tuple is $n$. We regard it as a function, which we denote $f$, reading $f(1),\dots,f(n)$ from left to right.

We claim that $(m_f(1),\dots,m_f(k))=(m_1,\dots,m_k)$. 
For $p+1\leq z\leq k$, if $m_z \geq 0$, notice that $z=x_r$ for some $1\leq r\leq u$ and thus appears totally $m_z$ times, all placed before the first occurrence of any $g(1),\dots,g(q)$, namely, before any $[p]$ has appeared. Hence $$m_f(z)=\sum_{j\in f^{-1}(z)} (-1)^0 = |f^{-1}(z)|=m_z.$$ If $m_z < 0$ then $z=y_r$ for some $1\leq r \leq v$ and thus $z$ appears totally $|m_z|$ times, all placed between the first and second occurrence of an index from $[p]$. Hence $$m_f(z)=\sum_{j\in f^{-1}(z)} (-1)^1 = -|f^{-1}(z)|=-|m_z|=m_z.$$ 

We now consider $1 < z \leq p$; exclude the case $z=1$ for the moment. Suppose first $m_z \geq 0$. Then $z=i_r$ for some $1\leq r\leq t$; recall that $z$ appears $m_z$ consecutive times in $R_+$. Since $g(2j-1)=R_+(j),g(2j)=R_-(j)$, it follows that every occurrence of $z$ in $f$ appears after an \emph{even} number of appearances of indices from $[p]$:
\begin{align*}
m_f(z) & = \sum_{\mu\in f^{-1}(z)} (-1)^{|\{m\in f^{-1}([p]):\ m<\mu\}|} \\ & = \sum_{j:\ R_+(j)=z} (-1)^{(2j-1)-1} =|\{j:\ R_+(j)=z\}|=m_z.
\end{align*}
Likewise, if $m_z < 0$ then $z=j_r$ for some $1\leq r \leq s$, and $z$ appears $|m_z|$ consecutive times in $R_{-}$. Since $g(2j-1)=R_+(j),g(2j-1)=R_{-}(j)$, it follows that every occurrence of $z$ in $f$ appears after an \emph{odd} number of appearances of indices from~$[p]$:
\begin{align*}
m_f(z) & = \sum_{\mu\in f^{-1}(z)} (-1)^{|\{m\in f^{-1}([p]):\ m<\mu\}|} \\ & = \sum_{j:\ R_{-}(j)=z} (-1)^{(2j)-1} =-|\{j:\ R_{-}(j)=z\}|=-|m_z|=m_z.
\end{align*}
To finish Case I, it remains to examine $z=1$. The difference from other cases of $1\leq z\leq p$ is that $1$ appears (in addition to the appearances coming from $g(1),\dots,g(q)$ in the first $l$ indices in $f$) $n-l$ more times in the end of $f$. But the contribution of the last $n-l$ appearances of $1$ to $m_f(1)$ is $(-1)^q+(-1)^{q+1}+\cdots+(-1)^{q+n-l-1} = \frac{(-1)^{q+n-l} - (-1)^q}{-2} = 0$ since $n \equiv l \pmod{2}$. 
Hence $\textbf{m}_f = \textbf{m}$.

\medskip

\noindent \emph{Case II: $q=0$.}
In this case, we either have $l < n$, or $m_i \geq 0$ for all $i \in \{p+1, \dots, k\}$ by assertion (3) in the statement of the theorem. As before, let $p+1 \leq x_1 < \cdots < x_u \leq k$ be the indices for which $m_{*} \geq 0$, and let $p+1\leq y_1 < \cdots < y_v \leq k$ be the indices for which $m_{*} < 0$. We let $(f(1),\dots,f(n))$ be
$$(x_1^{[m_{x_1}]},\dots,x_u^{[m_{x_u}]},1,y_1^{[|m_{y_1}|]},\dots y_v^{[|m_{y_v}|]},1^{[n-l-1]}).$$
Notice that this is well-defined if $l<n$; and if $l=n$ then by assumption, for all $p+1\leq z\leq k$ we have $m_z\geq 0$ and hence the above can be viewed as a `truncated' vector $(x_1^{[m_{x_1}]},\dots,x_u^{[m_{x_u}]})$. 
Now for any $2\leq z\leq p$, we have $m_f(z)=0$ as $z$ never appears in $f$; indeed, $m_z=0$ (since $q=0$). For $z=1$, we have a similar analysis to Case I: $$m_f(z)=(-1)^0+(-1)^1+\cdots+(-1)^{n-l-1}=\frac{(-1)^{n-l} - 1}{-2}=0$$ as $n \equiv l \pmod{2}$. That $m_f(z)=m_z$ for all $p+1\leq z\leq k$ follows as in Case I.
\end{proof}

\section{Growth of subsets with a given number of reflections}
We now focus on finding a closed-form expression for the number of those $\mathbf{m} \in \mathbb{Z}^k$ satisfying the criterion given in Proposition \ref{prop:mult-to-word}, which is equivalent to counting $|S_{k,p,n}^n|$. Recall that the case $p=0$ corresponds to sparse subsets of $\mathbb{Z}$, and indeed $|\Im \phi_{k,0,n}|=|S_{k,0,n}^n|={n+k-1 \choose k-1}=\gamma_\mathbb{Z}^{\max} (k,n)$. 
It remains to consider the case of a positive number of reflections, $p$.

Let $x,y\in \mathbb{Z}_{\geq 0}$. Let $N(x,y)$ denote the number of different ways to assign integer multiplicities to $x$ items, such that the absolute values of these multiplicities add up to $y$.

\begin{prop} \label{prop:N}
We have for $x,y > 0$
\begin{align*}
    N(x,y) = 
    \sum_{s=1}^{\min(x,y)}2^s\binom{x}{s}\binom{y - 1}{s-1}.
\end{align*}
and $N(x, 0) = 1$, $N(0, y) = \mathbbm{1}_{y=0}$. Moreover,
$$    G_x(T) := \sum_{y \geq 0}N(x, y)T^y = \left(\frac{1+T}{1-T}\right)^x,\ \ \text{and}\ \ \sum_{x,y\geq 0} N(x,y) X^x Y^y = \frac{1-Y}{1-X-Y-XY}.$$
\end{prop}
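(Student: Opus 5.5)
The plan is to prove the explicit formula by a direct combinatorial count, and the generating functions algebraically. This proves the product formula for $G_x$ as well.

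\emph{Explicit formula.} Fix $x,y>0$ and classify vectors $(m_1,\dots,m_x)\in\mathbb{Z}^x$ with $\sum|m_i|=y$ by their support size $s$. The value of $s$ ranges over $1\le s\le\min(x,y)$: $s\ge1$ since $y>0$, and $s\le y$ since each nonzero entry has absolute value at least $1$. For a given $s$, the vector is determined by three independent choices. First, choose the support in $\binom{x}{s}$ ways. Second, choose the signs of the nonzero entries in $2^s$ ways. Third, choose the absolute values, which form a composition of $y$ into $s$ positive parts; by stars and bars there are $\binom{y-1}{s-1}$ of these. Summing over $s$ gives the stated formula. The boundary values are immediate. $N(x,0)=1$, since only the zero vector has norm $0$. $N(0,y)=\mathbbm{1}_{y=0}$, since only the empty vector exists when $x=0$.

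\emph{Generating function in $T$.} A vector of $l^1$-norm $y$ is exactly a choice of one integer $m_i$ per coordinate with $\sum|m_i|=y$. Hence $G_x(T)$ factors as the $x$-th power of the single-coordinate series
\[
\sum_{m\in\mathbb{Z}}T^{|m|}=1+2\sum_{j\ge1}T^j=1+\frac{2T}{1-T}=\frac{1+T}{1-T}.
\]
The factorization is valid for all $x\ge0$, including the empty product when $x=0$.

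\emph{Bivariate generating function.} Sum the geometric series in $X$:
\[
\sum_{x\ge0}X^x\Bigl(\frac{1+Y}{1-Y}\Bigr)^x=\frac{1}{1-X\frac{1+Y}{1-Y}}=\frac{1-Y}{1-Y-X-XY},
\]
which is the claimed expression.

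No step is a real obstacle. The only care needed is in the edge cases: the range of $s$ in the sum, and checking that the formal power series manipulations are legitimate. They are, because $X\frac{1+Y}{1-Y}$ has zero constant term, so the geometric series converges formally in $\mathbb{Z}[[X,Y]]$.
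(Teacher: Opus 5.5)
Your proposal is correct and follows the paper's proof essentially step for step: the count by support size, signs and compositions for the explicit formula; the factorization of $G_x(T)$ as the $x$-th power of $1+2T+2T^2+\cdots$; and the geometric series in $X$ for the bivariate function. Your justification via formal power series in $\mathbb{Z}[[X,Y]]$ replaces the paper's analytic convergence remark and is slightly cleaner, but the route is the same.
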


\begin{proof}
    Let $s$ be the number of items with non-zero multiplicity. If $y = 0$, no such elements exist, so $N(x,y) = 1$ with all multiplicities equal to 0. If $x = 0$, then the sum of multiplicities $y$ is necessarily equal to 0. Otherwise, $s$ ranges from $1$ to $\min(x,y)$. There are $\binom{x}{s}$ ways to choose these $s$ elements, $2^s$ ways to assign signs, and $\binom{y-1}{s-1}$ ways to distribute multiplicities such that their absolute values sum to $y$ (distributing $y$ balls into $s$ bins such that they are all non-empty).

\medskip

Next, for the computation of $G_x(T)$ (for any fixed $x$), notice that 
\[
        N(x,y) = \sum_{\substack{(p_1, \dots, p_k) \in \mathbb{Z}_{\geq 0}^k \\ p_1+\cdots+p_x=y}}\prod_{i=1}^x N(1, p_i).
\]
Moreover, $N(1,p_i)=1 + \mathbbm{1}_{\{p_i>0\}}$ (indeed, if $p_i>0$ we can assign both $\pm p_i$). Subsequently,
\begin{align*}
\sum_{y\geq 0} N(x,y)T^y = \left(1+2T+2T^2+2T^3+\cdots\right)^x = \left(1+\frac{2T}{1-T}\right)^x=\left(\frac{1+T}{1-T}\right)^x
\end{align*}
which is convergent iff $|T|<1$. 
Finally,
\begin{align*}
\sum_{x,y\geq 0} N(x,y) X^x Y^y & = \sum_{x\geq 0}\left(\sum_{y\geq 0} N(x,y)Y^y \right) X^x = \sum_{x\geq 0}\left(\frac{1+Y}{1-Y}\right)^xX^x \\ & = \frac{1}{1-\frac{1+Y}{1-Y}X} = \frac{1-Y}{1-X-Y-XY}
\end{align*}
as claimed.
\end{proof}

Let $x,y\geq 1$ be integers. Let $R(x, y)$ be the number of different ways to assign integer multiplicities to $x$ items, such that the absolute values of these multiplicities add up to $y \geq 1$, and such that
\begin{enumerate}
    \item The positive multiplicities sum to $\ceil{y/2}$
    \item The negative multiplicities sum to $-\floor{y/2}$.
\end{enumerate}

\begin{prop} \label{prop:R}
If $x,y>1$
\begin{align*}
    R(x,y) = 
    \sum_{r=1}^{\min(x-1,\floor{y/2})}{\binom{x}{r}\binom{\floor{y/2} - 1}{r-1}\binom{\ceil{y/2} + (x-r) - 1}{(x-r) - 1}}
\end{align*}
and $R(x,1)=x$, $R(1,y) = \mathbbm{1}_{y=1}.$
\end{prop}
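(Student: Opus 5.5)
We count vectors $(m_1,\dots,m_x)\in\mathbb{Z}^x$ whose positive entries sum to $P:=\lceil y/2\rceil$ and whose negative entries sum to $-Q$ with $Q:=\lfloor y/2\rfloor$. Then $\sum_i|m_i|=P+Q=y$ holds automatically. The plan is to classify these vectors by the set of indices carrying negative entries, and then count positive and zero entries on the complement.

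\emph{Main case $x,y>1$.} Since $y\ge 2$, we have $Q\ge 1$, so at least one entry is negative. Let $r$ be the number of negative entries. Then $r\ge 1$ and $r\le Q$, because each negative entry contributes at least $1$ to $Q$. Also $r\le x-1$: since $P\ge 1$, at least one entry must be positive, so not every entry can be negative. Fix $r$.
\begin{enumerate}
\item Choose the $r$ indices carrying negative entries; there are $\binom{x}{r}$ ways.
\item Assign them strictly positive absolute values summing to $Q$. These are compositions of $Q$ into $r$ positive parts, giving $\binom{Q-1}{r-1}$ ways.
\item The remaining $x-r$ entries must be nonnegative and sum to $P$. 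Some of them may be zero, so these are weak compositions of $P$ into $x-r$ parts, giving $\binom{P+(x-r)-1}{(x-r)-1}$ ways.
\end{enumerate}
Any such vector with positive part $P\ge 1$ automatically has at least one positive entry, so nothing further needs checking. These choices determine the vector uniquely, and every admissible vector arises exactly once. Summing over $1\le r\le \min(x-1,Q)$ gives the stated formula.

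\emph{Boundary case $y=1$.} Here $P=1$ and $Q=0$. So there are no negative entries, exactly one entry equals $1$, and all other entries are $0$. This gives $R(x,1)=x$. (The general formula would give an empty sum here, which is why this case is stated separately.)

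\emph{Boundary case $x=1$.} The single entry cannot be both positive and negative, so we need $Q=0$, i.e. $y=1$. In that case the vector $(1)$ works. Hence $R(1,y)=\mathbbm{1}_{y=1}$.

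No step is a genuine obstacle. The one place needing care is the bookkeeping: negative entries must be nonzero (compositions), while the complementary entries may be zero (weak compositions). This asymmetry is what produces the $r-1$ in the middle binomial and the $(x-r)-1$ in the last one. It also produces the range restriction $r\le x-1$, which is forced by $P>0$.
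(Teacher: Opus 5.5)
Your proof is correct and follows essentially the same route as the paper. Both arguments classify by the number $r$ of negative entries, choose their positions in $\binom{x}{r}$ ways, count compositions of $\lfloor y/2\rfloor$ into $r$ positive parts and weak compositions of $\lceil y/2\rceil$ into the remaining $x-r$ parts, and treat $y=1$ and $x=1$ directly. You also justify the range $1\le r\le\min(x-1,\lfloor y/2\rfloor)$ using $\lceil y/2\rceil\ge 1$ and $\lfloor y/2\rfloor\ge 1$, which the paper only asserts.
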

\begin{proof}
For $y = 1$, there is exactly one item of multiplicity $1$, for which we have $x$ options, and the rest of the items must be assigned with multiplicity zero. For $x = 1$, this singular element must have multiplicity $1$ to satisfy the parity conditions, so there is exactly one assignment iff $y = 1$. If $x,y \geq 2$, let $r$ be the number of elements with negative multiplicity, ranging from $1$ to $\min(x-1, \floor{y/2})$. There are $\binom{x}{r}$ ways to choose these $r$ elements, $\binom{\floor{y/2} - 1}{r-1}$ ways of assigning negative multiplicities to these $r$ elements such that they sum to $-\floor{y/2}$, and $\binom{\ceil{y/2} + (x-r) - 1}{(x-r) - 1}$ ways of assigning non-negative multiplicities to the remaining $x-r$ elements such that they sum to $\ceil{y/2}$.
\end{proof}

We can now give a closed formula for $|S_{k,p,n}^n|$ for all $1 \leq p\leq k$.

\begin{proof}[{Proof of Theorem \ref{thm:k p n}}]
We need to prove that for every $1\leq p\leq k$ and $n$
\begin{align*}
        |S_{k,p,n}^n| 
        = \binom{n + k - p - 1}{k - p - 1} & +  \sum_{\substack{0 \leq l < n \\ n \equiv l \bmod{2}}} N(k-p,l) + \sum_{\substack{1 \leq l \leq n \\ n \equiv l \bmod{2}}}\sum_{q=1}^{l}R(p,q)N(k-p,l-q).
    \end{align*}
By Lemma \ref{lem:rep}, it is equivalent to compute $|\Im(\phi_{k,p,n})|$, which we approach using Proposition \ref{prop:mult-to-word}. We think of a vector $\textbf{m}\in \Im(\phi_{k,p,n})$ as an assignment of integral multiplicities $m_1,\dots,m_k$ to the elements $(a_1,b_1),\dots,(a_k,b_k)$. We split based on the value of $q=\sum_{i=1}^p |m_i|$ (as defined in Proposition \ref{prop:mult-to-word}); recall that this is the sum of the absolute values of the multiplicities assigned with the reflections. 

If $q = 0$, then the multiplicities of the reflections must all be $0$. Therefore, we must assign multiplicities to the $k-p$ translations such that their absolute values sum to $l$ for all $0\leq l\leq n$ such that $l\equiv n \pmod{2}$ and either (i) $l<n$ or (ii) if $l = n$, then all multiplicities of the translations are non-negative. 
The vectors $\textbf{m}$ falling in case (i), for each given $l$, are in bijection with vectors $(m_{p+1},\dots,m_k)\in \mathbb{Z}^{k-p}$ such that $\sum_{i=p+1}^{k} |m_i|=l$; this set is of cardinality $N(k-p,l)$ by definition, and hence the total number of vectors falling in case (i) is $$\sum_{\substack{0 \leq l < n \\ n \equiv l \bmod{2}}} N(k-p,l).$$ 
The number of vectors $\textbf{m}$ falling in case (ii) is just the number of partitions of $n$ balls into $k-p$ bins, that is, $$\binom{n + k - p - 1}{k - p - 1}.$$

We now turn to the case where $q > 0$. In that case, we must have $1\leq q\leq l\leq n$ and $l \equiv n \pmod{2}$. Fix such $l$; we let $q$ range from $1$ to $l$. Given such $q,l$, the number of vectors $\textbf{m}=(m_1,\dots,m_k)$ satisfying $\sum_{i=1}^p |m_i|=q$ and $\sum_{i=1}^k |m_i|=l$ (or, equivalently, $\sum_{i=p+1}^k |m_i|=l-q$) and adhering to Condition (2) in Proposition \ref{prop:mult-to-word} is $R(p,q)$ (the number of options for $(m_1,\dots,m_p)$) times $N(k-p,l-q)$ (the number of options for $(m_{p+1},\dots,m_k)$). Altogether, the number of vectors with $q>0$ is
$$\sum_{\substack{1 \leq l \leq n \\ n \equiv l \bmod{2}}}\sum_{q=1}^{l}R(p,q)N(k-p,l-q).$$
Collecting pieces, the claim is proved.
\end{proof}

\begin{cor} \label{cor:1 k n}
    We have $$\gamma_{D_\infty}(k,1,n) =|S_{k,1,n}^n| =1 + \binom{n + k - 2}{k - 2} + \sum_{s=1}^{\min(k-1,n-1)}2^s\binom{k-1}{s}\binom{n-1}{s}$$
    for all $k,n$.
\end{cor}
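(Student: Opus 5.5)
We specialize Theorem \ref{thm:k p n} to $p=1$ and simplify each of the three sums. The first term is immediate: $\binom{n+k-2}{k-2}$. For the rest, Proposition \ref{prop:R} gives $R(1,q)=\mathbbm{1}_{q=1}$, so the double sum collapses to
\[
\sum_{\substack{1\le l\le n\\ l\equiv n \bmod 2}} N(k-1,l-1).
\]
Reindexing by $l'=l-1$, this ranges over $0\le l'\le n-1$ with $l'\equiv n-1 \pmod 2$. The middle sum ranges over $0\le l<n$ with $l\equiv n\pmod 2$. Together they cover every $0\le l\le n-1$ exactly once, since the two parity classes are complementary. Hence
\[
\gamma_{D_\infty}(k,1,n)=\binom{n+k-2}{k-2}+\sum_{l=0}^{n-1}N(k-1,l).
\]

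Next, evaluate $\sum_{l=0}^{n-1}N(k-1,l)$. By Proposition \ref{prop:N}, $N(k-1,0)=1$, and for $l\ge1$,
\[
N(k-1,l)=\sum_{s\ge1}2^s\binom{k-1}{s}\binom{l-1}{s-1}.
\]
Swap the sums and apply the hockey-stick identity $\sum_{l=1}^{n-1}\binom{l-1}{s-1}=\binom{n-1}{s}$. This gives
\[
1+\sum_{s=1}^{\min(k-1,n-1)}2^s\binom{k-1}{s}\binom{n-1}{s},
\]
and the range of $s$ is automatic because the binomials vanish outside it. Alternatively, the same identity follows from generating functions: $\sum_{l<n}N(k-1,l)$ is the coefficient of $T^{n-1}$ in $\frac{1}{1-T}\left(\frac{1+T}{1-T}\right)^{k-1}$.

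The main obstacle is only bookkeeping at the edge cases. When $k=1$ there are no translations, and we need $N(0,l)=\mathbbm{1}_{l=0}$ together with the convention $\binom{n-1}{-1}=0$. Then the formula gives $1+0+0=1$, which matches $S^n=\{s\}^n$ being a single element. When $n=1$ the sum over $s$ is empty, and the formula gives $1+\binom{k-1}{k-2}=k$, as it should. I would check both cases explicitly, and also check that the parity merging above is valid for small $n$.
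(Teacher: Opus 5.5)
Your proposal is correct and follows essentially the same route as the paper: specialize Theorem \ref{thm:k p n} to $p=1$, use $R(1,q)=\mathbbm{1}_{\{q=1\}}$ to collapse the double sum, merge the two complementary parity classes into $\sum_{l=0}^{n-1}N(k-1,l)$, then swap the sums and apply the hockey-stick identity. Your explicit checks of the edge cases $k=1$ and $n=1$ (with the convention $\binom{n-1}{-1}=0$) are a small addition that the paper leaves implicit.
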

\begin{proof}
    By Theorem \ref{thm:k p n},
    \begin{align*}
        |S_{k, 1, n}^n|
        &= \binom{n + k - 2}{k - 2} +  \sum_{\substack{0 \leq l < n \\ n \equiv l \bmod{2}}} N(k-1,l) + \sum_{\substack{1 \leq l \leq n \\ n \equiv l \bmod{2}}}N(k-1,l-1) \\
        &= \binom{n + k - 2}{k - 2} +  \sum_{l=0}^{n-1} N(k-1,l)\\
        &= 1 + \binom{n + k - 2}{k - 2} +  \sum_{l=1}^{n-1}\sum_{s=1}^{\min(k-1,l)}2^s\binom{k-1}{s}\binom{l - 1}{s-1}\\
        &= 1 + \binom{n + k - 2}{k - 2} + \sum_{s=1}^{\min(k-1,n-1)}2^s\binom{k-1}{s}\sum_{l=s}^{n-1}\binom{l - 1}{s-1}\\
        &= 1 + \binom{n + k - 2}{k - 2} + \sum_{s=1}^{\min(k-1,n-1)}2^s\binom{k-1}{s}\binom{n-1}{s}.
    \end{align*}

    The first step follows from the fact that $R(1,q) = \mathbbm{1}_{\{q=1\}}$. The final step applies the hockey-stick identity to the inner summation.
\end{proof}

\section{Growth of subsets in the infinite dihedral group}

\begin{lem}\label{lem:N-asymp}
Fix a positive integer $x$. Then
\[
    N(x,y) = \frac{2^{x}}{(x-1)!}y^{x-1} + O(y^{x-2}).
\]
\end{lem}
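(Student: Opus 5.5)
We will start from the explicit formula of Proposition \ref{prop:N}: for $y\geq x$ (so that $\min(x,y)=x$),
\[
N(x,y)=\sum_{s=1}^{x}2^s\binom{x}{s}\binom{y-1}{s-1}.
\]
Since $x$ is fixed, this is a finite sum of $x$ terms. Each term is a polynomial in $y$ of degree $s-1$, so only the term $s=x$ can contribute to order $y^{x-1}$. All terms with $s\leq x-1$ are polynomials of degree at most $x-2$, hence are $O(y^{x-2})$, with implied constant depending only on $x$.

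For the leading term $s=x$ we have $2^x\binom{x}{x}\binom{y-1}{x-1}=2^x\binom{y-1}{x-1}$. We expand
\[
\binom{y-1}{x-1}=\frac{(y-1)(y-2)\cdots(y-x+1)}{(x-1)!}.
\]
This is a monic polynomial of degree $x-1$ in $y$ divided by $(x-1)!$, so it equals $\frac{y^{x-1}}{(x-1)!}+O(y^{x-2})$. Multiplying by $2^x$ gives the main term $\frac{2^x}{(x-1)!}y^{x-1}$.

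It remains to handle the finitely many small values $y<x$ (and $y=0$). Since the statement is asymptotic in $y$ with $x$ fixed, these are absorbed into the $O$-constant. The case $x=1$ needs a remark: there $N(1,y)=2$ for $y>0$, and $y^{-1}$-order error is just a bounded correction consistent with $N(1,0)=1$. Alternatively, one reads $O(y^{x-2})$ as $O(1)$ when $x=1$; both readings agree with the formula $\frac{2^1}{0!}y^0=2$.

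There is no real obstacle here. The only point needing care is noting that the error terms from $s<x$ have bounded coefficients because $x$ is fixed. As a cross-check, the same conclusion follows from the generating function $\left(\frac{1+T}{1-T}\right)^x$: it has a pole of order $x$ at $T=1$ with leading coefficient $2^x$, so its coefficients grow like $2^x\binom{y+x-1}{x-1}\sim\frac{2^x}{(x-1)!}y^{x-1}$.
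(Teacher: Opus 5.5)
Your proof is correct and follows the paper's argument. Both start from the explicit sum in Proposition \ref{prop:N} for $y\geq x$ and note that, since $x$ is fixed, only the $s=x$ term contributes at order $y^{x-1}$, with all other terms being $O(y^{x-2})$. One small tidy-up: for $x=1$ the bound should be read for $y\geq 1$, where $N(1,y)=2$ exactly and the error vanishes; your remark about $N(1,0)=1$ is unnecessary, since $y^{-1}$ is undefined at $y=0$.
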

\begin{proof}
By Proposition \ref{prop:N}, for $y\geq x$,
    \begin{align*}
        N(x,y) & =  \sum_{s=1}^{\min(x,y)} 2^s \binom{x}{s} \binom{y-1}{s-1} \\ & =\sum_{s=1}^{x}2^s\binom{x}{s}\left(\frac{y^{s-1}}{(s-1)!} + O(y^{s-2})\right) = \frac{2^{x}}{(x-1)!}y^{x-1} + O(y^{x-2}).
    \end{align*}
\end{proof}

\begin{lem}\label{lem:R-asymp}
Fix an integer $x > 1$. Then
\[
    R(x,y) = \frac{2^{2-x}}{(x-2)!}\binom{2x-2}{x-1}y^{x-2} + O(y^{x-3}).
\]
\end{lem}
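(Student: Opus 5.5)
The plan is to start from the explicit formula in Proposition \ref{prop:R} and extract the leading term in $y$ summand by summand, with $x$ fixed and $y\to\infty$. Once $y\geq 2x$ we have $\min(x-1,\floor{y/2})=x-1$. The range of summation is then the fixed set $1\leq r\leq x-1$, so the number of summands does not depend on $y$. It therefore suffices to find the asymptotics of each summand
\[
T_r(y)=\binom{x}{r}\binom{\floor{y/2}-1}{r-1}\binom{\ceil{y/2}+x-r-1}{x-r-1}
\]
and add them up.

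For fixed $r$, each binomial coefficient is a polynomial in its top argument of known degree. We have $\binom{M-1}{r-1}=\frac{M^{r-1}}{(r-1)!}+O(M^{r-2})$ and $\binom{M'+x-r-1}{x-r-1}=\frac{M'^{\,x-r-1}}{(x-r-1)!}+O(M'^{\,x-r-2})$. Substituting $M=\floor{y/2}$ and $M'=\ceil{y/2}$, both equal to $y/2+O(1)$, the floor and ceiling only affect lower-order terms. This gives
\[
T_r(y)=\binom{x}{r}\frac{(y/2)^{x-2}}{(r-1)!\,(x-r-1)!}+O(y^{x-3}).
\]
The exponent $(r-1)+(x-r-1)=x-2$ is the same for every $r$, so every summand contributes to the leading term.

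The one step that needs a little care is evaluating the resulting coefficient sum. I would rewrite
\[
\frac{1}{(r-1)!\,(x-r-1)!}=\frac{1}{(x-2)!}\binom{x-2}{r-1}=\frac{1}{(x-2)!}\binom{x-2}{x-1-r},
\]
so that
\[
\sum_{r=1}^{x-1}\binom{x}{r}\binom{x-2}{x-1-r}=\binom{2x-2}{x-1}
\]
by Vandermonde's identity. Extending the sum to all $r$ adds only zero terms: $r=0$ gives $\binom{x-2}{x-1}=0$, and $r=x$ gives $\binom{x-2}{-1}=0$. Combining these, $R(x,y)=\frac{2^{2-x}}{(x-2)!}\binom{2x-2}{x-1}y^{x-2}+O(y^{x-3})$, as claimed.

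For the edge case $x=2$, the sum has the single term $r=1$, equal to $2$, and the formula gives $R(2,y)=2+O(y^{-1})$. This is consistent, since the error term is interpreted as the statement that $R(2,y)$ is eventually constant, equal to $2$.
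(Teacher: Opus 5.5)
Your proposal is correct and follows essentially the same route as the paper. Both arguments take $y$ large enough that the sum in Proposition~\ref{prop:R} runs over the fixed range $1\le r\le x-1$, expand each binomial coefficient to leading order in $y/2$, rewrite the coefficient as $\frac{2^{2-x}}{(x-2)!}\sum_r\binom{x}{r}\binom{x-2}{x-1-r}$, and evaluate it by Vandermonde's identity; your explicit treatment of the floor/ceiling error and of the case $x=2$ is just a slightly more careful write-up of the same argument.
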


\begin{proof}
By Proposition \ref{prop:R}, for $y\geq 2x-2$,
\begin{align*}
    R(x,y) 
    &= 
    \sum_{r=1}^{\min(x-1,\floor{y/2})}{\binom{x}{r}\binom{\floor{y/2} - 1}{r-1}\binom{\ceil{y/2} + (x-r) - 1}{(x-r) - 1}} \\ & = \sum_{r=1}^{x-1}{\binom{x}{r}\binom{\floor{\frac{y}{2}} - 1}{r-1}\binom{\ceil{\frac{y}{2}} + (x-r) - 1}{(x-r) - 1}}\\
    &= \sum_{r=1}^{x-1}{\binom{x}{r}\left(\frac{(\frac{y}{2})^{r-1}}{(r-1)!} + O(y^{r-2})\right)\left(\frac{(\frac{y}{2})^{x-r-1}}{(x-r-1)!} + O(y^{x-r-2})\right)}\\ 
    &= \sum_{r=1}^{x-1}{\binom{x}{r}\left(\frac{1}{(r-1)!(x-r-1)!}\left(\frac{y}{2}\right)^{x-2} + O(y^{x-3})\right)}\\
    &= C_xy^{x-2} + O(y^{x-3})
\end{align*}
where
\begin{align*}
    C_x 
    & = 2^{2-x}\sum_{r=1}^{x-1}\binom{x}{r}\frac{1}{(r-1)!(x-r-1)!} 
   \\ & = \frac{2^{2-x}}{(x-2)!}\sum_{r=1}^{x-1}\binom{x}{r}\binom{x-2}{x-r-1} \\ & = \frac{2^{2-x}}{(x-2)!}\sum_{r=0}^{x-1}\binom{x}{r}\binom{x-2}{x-r-1} = \frac{2^{2-x}}{(x-2)!}\binom{2x-2}{x-1}
\end{align*}
where the final equality follows from Vandermonde's identity (that is, $\sum_{i=0}^k \binom{m}{i} \binom {n}{k-i} = \binom{m+n}{k}$). The bound on the remainder follows similarly to \ref{lem:N-asymp}.
\end{proof}

\begin{proof}[{Proof of Theorem \ref{thm:k fixed}}]
Let $k,p,n \in \mathbb{N}$ with $1 \leq p \leq k$ fixed. We aim to prove
    \[
    |S_{k,p,n}^n| = \frac{2^{k-1}}{(k-1)!}\frac{\binom{2p-2}{p-1}}{2^{2p-2}}n^{k-1} + O(n^{k-2}).
    \]
Recall, from Theorem \ref{thm:k p n}, that
    \begin{align} \label{formula}
        |S_{k,p,n}^n| 
        &= \underbrace{\binom{n + k - p - 1}{k - p - 1}}_{(I)} +  \underbrace{\sum_{\substack{0 \leq l < n \\ n \equiv l \bmod{2}}} N(k-p,l)}_{(II)} + \underbrace{\sum_{\substack{1 \leq l \leq n \\ n \equiv l \bmod{2}}}\sum_{q=1}^{l}R(p,q)N(k-p,l-q)}_{(III)}
    \end{align}
Let $\alpha(x)$ and $\beta(x)$ be the leading coefficients of $N(x,y)$ and $R(x,y)$, respectively, viewed as functions of $y$. By Lemmas \ref{lem:N-asymp} and \ref{lem:R-asymp}, 
$$\alpha(x) = \frac{2^x}{(x-1)!},\ \ \beta(x) = \frac{2^{2-x}}{(x-2)!} \binom{2x-2}{x-1}.$$
There are three summands in (\ref{formula}), which we denote $(I),(II),(III)$. The first two are easy to analyze given our constraints, yielding
\[
  (I) =  \binom{n + k - p - 1}{k - p - 1} = \frac{1}{(k-p-1)!}n^{k-p-1} + O(n^{k-p-2})
\]
if $1 \leq p \leq k-1$ and $(I) = 0$ if $p=k$. In any case that $p\geq 1$, we have $(I)=O(n^{k-2})$, which will turn out to be negligible compared to $(II),(III)$.

\medskip

By Lemma \ref{lem:N-asymp}, when $1\leq p\leq k-1$ and $l > 0$, we can write $N(k-p,l)=\alpha(k-p)\cdot l^{k-p-1} + \Delta_l$ where $|\Delta_l|\leq K\cdot l^{k-p-2}$ for some constant $K$ depending solely on $k$ (but not on $l$). Thus
\begin{align*}
    (II) & = \sum_{\substack{0 \leq l < n \\ n \equiv l \bmod{2}}} N(k-p,l) = \mathbbm{1}_{\{n\ \text{even}\}} + \sum_{\substack{1 \leq l \leq n \\ n \equiv l \bmod{2}}}\alpha(k-p) l^{k-p-1} + \Delta_l \\ & = \mathbbm{1}_{\{n\ \text{even}\}} +\alpha(k-p) \cdot \sum_{\substack{1 \leq l \leq n \\ n \equiv l \bmod{2}}} l^{k-p-1} + \sum_{\substack{1 \leq l \leq n \\ n \equiv l \bmod{2}}} \Delta_l \\ & = \frac{\alpha(k-p)}{2}\sum_{l=1}^{n}l^{k-p-1} + O(n^{k-p-1}) \\ & = \frac{\alpha(k-p)}{2} \cdot \frac{n^{k-p}}{k-p} + O(n^{k-p-1}) =  \frac{2^{k-p-1}}{(k-p)!}n^{k-p} + O(n^{k-p-1}).
\end{align*}
Now if $2\leq p\leq k-1$ we see that $(II)=O(n^{k-2})$; if $p=1$ then $$(II)=\frac{2^{k-2}}{(k-1)!} n^{k-1} + O(n^{k-2})=\Theta(n^{k-1}).$$ If $p=k$ then we get $$(II) = \sum_{\substack{0 \leq l < n \\ n \equiv l \bmod{2}}} N(0,l)=\mathbbm{1}_{\{n\ \text{even}\}}.$$

The third summand $(III)$ is more involved. We first tackle the inner summation for each given $1\leq l\leq n,\ l\equiv n \pmod{2}$.
We now write for brevity $\alpha=\alpha(k-p),\beta=\beta(p)$. 

Consider the case $2 \leq p \leq k-1$. Recall that by Lemmas \ref{lem:N-asymp} and \ref{lem:R-asymp}, we have some constant $K$ depending only on $k,p$ (not on $q,l$) such that
\begin{eqnarray*}
R(p,q) & =& \beta q^{p-2} + \Delta_{q}; \\ N(k-p,l-q) & = &\alpha (l-q)^{k-p-1}+E_{l,q}\ (\text{if}\ q \neq l);\ \text{and} \\ |\Delta_q| & \leq &  Kq^{p-3}, \ \ \ |E_{l,q}|\leq K(l-q)^{k-p-2}.
\end{eqnarray*}

In the case that $q = l$, we can follow the analysis of (II) above to get the negligible contribution (for $p < k$)
\[
\sum_{\substack{1 \leq l \leq n \\ n \equiv l \bmod{2}}}R(p, l)N(k-p,0) = \sum_{\substack{1 \leq l \leq n \\ n \equiv l \bmod{2}}}R(p, l) = \frac{\beta}{2}\cdot\frac{n^{p-1}}{p-1} + O(n^{p-2}).
\]
Otherwise, returning to the inner summation, we have
\begin{align*}
\Big|R(p,q)N(k-p,l-q) - \alpha \beta q^{p-2} (l-q)^{k-p-1} \Big| & = \Big|\beta q^{p-2}E_{l,q} + \alpha(l-q)^{k-p-1} \Delta_q + \Delta_q E_{l,q} \Big| \\ & \leq M\left(q^{p-2}(l-q)^{k-p-2} + q^{p-3}(l-q)^{k-p-1}\right)
\end{align*}
for some constant $M$ depending solely on $k,p$ but not on $l,q$. Therefore,
\begin{align} \label{sum RN}
\Big|\sum_{q=1}^{l-1} R(p,q)N(k-p,l-q) - \alpha \beta \sum_{q=1}^{l-1} q^{p-2} (l-q)^{k-p-1} \Big| & \leq M\sum_{q=1}^{l-1} q^{p-2}(l-q)^{k-p-2} \nonumber \\ & + M\sum_{q=1}^{l-1} q^{p-3}(l-q)^{k-p-1}
\end{align}
Now
\begin{align*}
    \sum_{q=1}^{l-1} q^a(l-q)^b = \sum_{q=1}^l q^a(l-q)^b = l^{a+b+1}\cdot \frac{1}{l}\sum_{q=1}^l \left(\frac{q}{l}\right)^a \left(1-\frac{q}{l}\right)^b
\end{align*}
and
\begin{align*}
    \Big|\frac{1}{l}\sum_{q=1}^l \left(\frac{q}{l}\right)^a \left(1-\frac{q}{l}\right)^b-\int_0^1 \underbrace{x^a (1-x)^b}_{=:u_{a,b}(x)} dx \Big| \leq \frac{\max_{x\in [0,1]} |u_{a,b}'(x)|}{l}\leq \frac{a+b}{l}
\end{align*}
Recall that $\int_0^1 u_{a,b}(x) dx=\frac{\Gamma(a)\Gamma(b)}{\Gamma(a+b+1)}$, so
\begin{align*}
    \Big| \sum_{q=1}^l q^a(l-q)^b - \frac{a!b!}{(a+b+1)!} l^{a+b+1} \Big| \leq (a+b) l^{a+b}.
\end{align*}
Back to (\ref{sum RN}),
\begin{align}
\Big|\sum_{q=1}^{l-1} R(p,q)N(k-p,l-q) - \alpha \beta \frac{(p-2)!(k-p-1)!}{(k-2)!} l^{k-2} \Big| \leq Cl^{k-3}
\end{align}
for some $C$ depending solely on $k,p$ but not on $l$.
Now

\begin{align*}
\sum_{\substack{1\leq l\leq n \\ n\equiv l \pmod{2}}}\sum_{q=1}^{l-1} R(p,q)N(k-p,l-q) & = \alpha \beta \frac{(p-2)!(k-p-1)!}{(k-2)!} \sum_{\substack{1\leq l\leq n \\ n\equiv l \pmod{2}}}  l^{k-2} + O(n^{k-2}) \\ & = \alpha \beta \frac{(p-2)!(k-p-1)!}{(k-2)!}\cdot \frac{1}{2(k-1)}n^{k-1} + O(n^{k-2})
\end{align*}
The coefficient of $n^{k-1}$ in the above expression is
\begin{align}
& \frac{2^{k-p}}{(k-p-1)!} \cdot \frac{2^{2-p}}{(p-2)!} \binom{2p-2}{p-1} \cdot \frac{(p-2)!(k-p-1)!}{(k-2)!}\cdot \frac{1}{2(k-1)} \nonumber \\ & = \frac{2^{k-2p+1}}{(k-1)!}\binom{2p-2}{p-1}.
\end{align}
As established above, for $2\leq p\leq k-1$, we have $(I),(II)=O(n^{k-2})$, yielding the desired result.

Now, consider $p=1$. Then still $(I)=O(n^{k-2})$, but $(II)=\frac{2^{k-2}}{(k-1)!} n^{k-1} + O(n^{k-2})$. The above analysis of $(III)$ applies only to $2\leq p\leq k-1$, and for $p=1$ we have $R(p,q)=\mathbbm{1}_{\{q=1\}}$. Thus $\sum_{q=1}^l R(1,q)N(k-1,l-q)=N(k-1,l-1)$ and 
\begin{eqnarray*}
(III) & = & \sum_{\substack{1\leq l\leq n \\ n\equiv l \pmod{2}}}\sum_{q=1}^l R(1,q)N(k-1,l-q) \\ & = & \sum_{\substack{1\leq l\leq n \\ n\equiv l \pmod{2}}} N(k-1,l-1) = \frac{2^{k-2}}{(k-1)!} n^{k-1} + O(n^{k-2})    
\end{eqnarray*} 
where the last equality follows as in the analysis of $(II)$ above. Combining the components, we have that for $p=1$
\begin{align*}
(I)+(II)+(III) & = O(n^{k-2}) + \left(\frac{2^{k-2}}{(k-1)!} n^{k-1} + O(n^{k-2}) \right) + \left(\frac{2^{k-2}}{(k-1)!} n^{k-1} + O(n^{k-2})\right) \\ & =\frac{2^{k-1}}{(k-1)!} n^{k-1} + O(n^{k-2}).
\end{align*}
Finally, for $p=k$ we have $(I)=0$ and $(II)=\mathbbm{1}_{\{n\ \text{even}\}}$. As for $(III)$, recall that $N(0,y)=\mathbbm{1}_{\{y=0\}}$, so 
\begin{align*} 
(III) & =\sum_{\substack{1\leq l\leq n \\ n\equiv l \pmod{2}}}\sum_{q=1}^l R(k,q)N(0,l-q) = \sum_{\substack{1\leq l\leq n \\ n\equiv l \pmod{2}}} R(k,l) \\ & = \sum_{\substack{1\leq l\leq n \\ n\equiv l \pmod{2}}} \frac{2^{2-k}}{(k-2)!}\binom{2k-2}{k-1} l^{k-2} + O(n^{k-2}) \\ 
& = \frac{2^{2-k}}{(k-2)!}\binom{2k-2}{k-1} \cdot \frac{1}{2(k-1)} n^{k-1} + O(n^{k-2})
\end{align*}
finishing the proof.
\end{proof}

\begin{cor}
    Given a fixed $k\geq 2$, for $n\gg 1$, a size-$k$ subset $S\subset D_\infty$ maximizing $|S^n|$ has one reflection and $k-1$ translations, and \begin{align*} \gamma^{\max}_{D_\infty}(k,n) & = 1 + \binom{n + k - 2}{k - 2} + \sum_{s=1}^{\min(k-1,n-1)}2^s\binom{k-1}{s}\binom{n-1}{s} \\ & = \frac{2^{k-1}}{(k-1)!} n^{k-1} + O(n^{k-2}).
    \end{align*}
\end{cor}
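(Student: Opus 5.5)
The corollary is assembled from results already established; the only genuine work is comparing the leading coefficients across $p$ and checking that $p=0$ is beaten.

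First, by definition $\gamma^{\max}_{D_\infty}(k,n)=\max_{0\le p\le k}\gamma_{D_\infty}(k,p,n)$. The case $p=0$ gives $\gamma_{D_\infty}(k,0,n)=\binom{n+k-1}{k-1}=\frac{1}{(k-1)!}n^{k-1}+O(n^{k-2})$. For $1\le p\le k$, Theorem \ref{thm:k fixed} gives leading coefficient $c_p\cdot\frac{2^{k-1}}{(k-1)!}$ in degree $k-1$, where $c_p=\binom{2p-2}{p-1}/2^{2p-2}$. So the comparison reduces to the finitely many constants $c_0:=2^{-(k-1)}$ (for $p=0$) and $c_1,\dots,c_k$.

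Second, I will show that $c_1=1$ strictly exceeds all the others.
\begin{itemize}
\item For $p\ge 2$, $c_p$ is the probability that $2p-2$ fair coin flips produce exactly $p-1$ heads. This is at most the probability of the central value for $2$ flips, namely $1/2$, since $c_{p+1}/c_p=\frac{2p-1}{2p}<1$.
\item Since $k\ge2$, we also have $c_0=2^{1-k}\le 1/2<1$.
\end{itemize}
Because $k$ is fixed, there are finitely many $p$, each with an $O(n^{k-2})$ error whose constant depends only on $(k,p)$. For $n$ large, $\gamma_{D_\infty}(k,1,n)-\gamma_{D_\infty}(k,p,n)\ge \frac{1}{2}\cdot\frac{2^{k-1}}{(k-1)!}n^{k-1}-O(n^{k-2})>0$ for every $p\neq1$. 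The threshold for "large" can be taken as the maximum of finitely many thresholds.

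Hence, for $n\gg1$, every maximizing set has exactly one reflection. Indeed, by Lemma \ref{lem:tkn_all_you_need}, any set with $p\ne1$ reflections has $|S^n|\le\gamma_{D_\infty}(k,p,n)<\gamma_{D_\infty}(k,1,n)$. This also gives $\gamma^{\max}_{D_\infty}(k,n)=\gamma_{D_\infty}(k,1,n)$.

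Third, the explicit closed formula is exactly Corollary \ref{cor:1 k n}, and the asymptotic $\frac{2^{k-1}}{(k-1)!}n^{k-1}+O(n^{k-2})$ is the $p=1$ case of Theorem \ref{thm:k fixed}.

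The only step that needs care is making sure the $O(n^{k-2})$ errors do not interfere. This is immediate here because the number of competing values of $p$ is bounded and the gap between leading coefficients is a fixed positive constant. So I expect no real obstacle beyond checking monotonicity of $c_p$ and the comparison with $c_0$.
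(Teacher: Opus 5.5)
Your proposal is correct and follows the paper's argument: compare the leading coefficients supplied by Theorem \ref{thm:k fixed}, conclude that $p=1$ strictly wins for $n\gg 1$, and read off the closed form from Corollary \ref{cor:1 k n}. You also fill in details the paper leaves implicit, namely the explicit comparison with the $p=0$ competitor (coefficient $2^{1-k}\le 1/2$), the uniform bound $c_p\le 1/2$ for $p\ge 2$, and the observation that the finitely many $O(n^{k-2})$ error constants cannot interfere.
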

\begin{proof}
This follows from the previous theorem, as for any $p\geq 2$, we have that
$$\frac{\binom{2p-2}{p-1}}{2^{2p-2}} = \Pr\left(\substack{\text{A random subset of}\ \\ {\{1,\dots,2p-2\}\ \text{has size}\ p-1}}\right) < 1.$$
Hence $\gamma^{\max}_{D_\infty}(k,n)=|S_{k,1,n}^n|$. The result follows from Corollary \ref{cor:1 k n} and from the previous theorem.
\end{proof}

We now shift our attention away from the fixed $k$ case. If we instead assume that $k = n$, then the limit $\lim_{n \to \infty} \gamma_{D_\infty}^{\max}(n, n)$ grows exponentially in $n$. 
Recall the binary entropy $H(\alpha)=-\alpha \log_2 \alpha - (1-\alpha)\log_2(1-\alpha)$. By Stirling's formula, $\log_2 \binom{n}{\floor{\alpha n}} = H(\alpha) n + o(n)$.

\begin{proof}[{Proof of Theorem \ref{thm:k=n}}]
By Corollary \ref{cor:1 k n}
    \[
    \gamma_{D_\infty}(n,1,n)=|S_{n, 1, n}^n| \geq \sum_{s=1}^{n-1} 2^s \binom{n-1}{s}^2 \geq 2^t \binom{n-1}{t}^2
    \]
    for any given $1\leq t\leq n-1$. Let $\alpha = 2 - \sqrt{2}$, and consider $t = \floor{\alpha n}$ (for $n\gg 1$ we have $1\leq t\leq n-1$). Thus
    $$
    \overline{\lim}_{n\rightarrow \infty} \gamma^{\max}_{D_\infty}(n, n)^{1/n} \geq \overline{\lim}_{n\rightarrow \infty} 2^{t/n} \binom{n-1}{t}^{2/n} = 2^{\alpha+2H(\alpha)}.
    $$   
    A straightforward computation now shows that $2^{\alpha+2H(\alpha)}=3+2\sqrt{2}$.

We now turn to bound $\gamma^{\max}_{D_\infty}(n,n)^{1/n}$ from above. 
By Proposition \ref{prop:mult-to-word}, for each $1\leq p\leq k,\ n$:
\[
\Im(\phi_{k,p,n}) \subseteq \bigcup_{l=0}^{n}\left\{(m_1, \dots, m_k) \in \mathbb{Z}^k \;\bigg|\; \sum_{i=1}^{k}|m_i| = l\right\}
\]
so $|S_{k,p,n}^n|=|\Im(\phi_{k,p,n})|\leq \sum_{l=0}^n N(k,l)$. Now for any $T\in (0,1)$ we have
\[
    \sum_{l=0}^{n}N(k,l) = \sum_{l=0}^{n}N(k,l)T^lT^{-l}\leq T^{-n}\sum_{l=0}^{n}N(k,l)T^l \leq T^{-n}G_k(T) = \frac{1}{T^n}\left(\frac{1+T}{1-T}\right)^k.
\]
Hence for $k=n$ we have $$\gamma^{\max}_{D_\infty}(n,n)^{1/n} =\max_{1\leq p\leq n} |S_{p,n,n}^n|^{1/n} \leq \frac{1+T}{T(1-T)}$$ for every $T\in (0,1)$. In particular, for $T=\sqrt{2}-1$ we obtain
$$
\gamma^{\max}_{D_\infty}(n,n)^{1/n} \leq \frac{1+(\sqrt{2}-1)}{(\sqrt{2}-1)(1-(\sqrt{2}-1))} = 3+2\sqrt{2},
$$
as claimed.
\end{proof}

Every virtually cyclic group is either finite-by-$\mathbb{Z}$ or finite-by-$D_\infty$ (see e.g. \cite{St}).

\begin{cor}
Let $G$ be a virtually cyclic group. Then $\gamma^{\max}_G(k,n)\asymp n^{k-1}$ for any fixed $k$, and $\overline{\lim}_{n\rightarrow \infty} \gamma^{\max}_G(n,n)=\begin{cases}
        4 & \text{if $G$ is finite-by-$\mathbb{Z}$}\\
        3+2\sqrt{2} & \text{if $G$ is finite-by-$D_\infty$}\\
    \end{cases}$.
\end{cor}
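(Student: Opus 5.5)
The plan is to reduce the corollary to the two structural facts for virtually cyclic groups and then transfer the $D_\infty$ and $\mathbb{Z}$ computations up to $G$. By the cited dichotomy, $G$ has a finite normal subgroup $F$ with $G/F\cong \mathbb{Z}$ or $G/F\cong D_\infty$; write $\pi\colon G\to Q$ for the quotient. I read $\gamma^{\max}_G$ as $\gamma_G$, and the displayed limit as the limit of $\gamma_G(n,n)^{1/n}$ (the growth exponent), as in Theorem \ref{thm:k=n}. I will sandwich $\gamma_G$ between $\gamma_Q$ and $|F|^n\gamma_Q$, then show that this factor $|F|^n$ can be removed.

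\emph{Lower bound.} $G$ contains an infinite cyclic subgroup. In the $D_\infty$ case it also contains a copy of $D_\infty$. To see this, lift a reflection $\bar r$ and a translation $\bar t$ of $Q$ to $r,t\in G$. Some power $t^m$ centralizes $F$ and satisfies $r t^m r^{-1}\in t^{-m}F$. Replacing $t$ by a suitable power, one wants $\langle r, t\rangle$ to be virtually $D_\infty$ with a finite-index dihedral subgroup. If this is awkward, a cleaner route avoids subgroups altogether. Take the optimal $S_{k,p,n}\subset Q$ from Section 3 and lift it arbitrarily to $\tilde S\subset G$. Since $\pi(\tilde S^n)=S_{k,p,n}^n$, we get $|\tilde S^n|\ge |S^n_{k,p,n}|$. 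Hence $\gamma_G(k,n)\ge \gamma_Q(k,n)$, which gives the lower bounds $n^{k-1}$, $4$, and $3+2\sqrt2$ via Theorems \ref{thm:k fixed} and \ref{thm:k=n} (and the $\mathbb{Z}$ formula).

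\emph{Upper bound.} The naive bound $|\tilde S^n|\le |F|\cdot|\pi(\tilde S)^n|$ holds because fibers have size $|F|$. Since $|\pi(\tilde S)|\le k$, this gives $\gamma_G(k,n)\le |F|\,\gamma_Q(k,n)$. The constant factor $|F|$ does not affect $\asymp n^{k-1}$ or the $n$-th root limit. One point needs checking: $\gamma_Q(k',n)$ is monotone in $k'$ when $\pi(\tilde S)$ has fewer than $k$ elements. This holds by adding elements.

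The main obstacle is the dichotomy itself: identifying $\pi$ and verifying that fibers of $\pi$ have size exactly $|F|$, which is immediate. So the fiber bound is the key step, and it makes the argument short. For fixed $k$ it yields $\gamma_Q(k,n)\le\gamma_G(k,n)\le |F|\gamma_Q(k,n)$. For $\mathbb{Z}$ this is $\binom{n+k-1}{k-1}\asymp n^{k-1}$, and for $D_\infty$ Theorem \ref{thm:k fixed} gives $\asymp n^{k-1}$. For $k=n$, taking $n$-th roots and using $|F|^{1/n}\to 1$ gives $4$ (since $\binom{2n-1}{n-1}^{1/n}\to4$) and $3+2\sqrt2$ respectively, by Theorem \ref{thm:k=n}.
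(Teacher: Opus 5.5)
Your argument is correct and matches the paper's: the paper's proof is exactly the sandwich $\gamma_Q(k,n)\le\gamma_G(k,n)\le |F|\,\gamma_Q(k,n)$ for a finite normal subgroup $F$ with $G/F\cong Q\in\{\mathbb{Z},D_\infty\}$. You supply the details it leaves implicit, namely lifting an optimal set for the lower bound, and the fiber count plus monotonicity in $k$ for the upper bound. Two cosmetic fixes: your opening sentence mentions a factor $|F|^n$ where you actually (and necessarily) use $|F|$, and the abandoned detour about embedding $D_\infty$ in $G$ can simply be deleted.
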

\begin{proof}
If $1\rightarrow N\rightarrow G\rightarrow H\rightarrow 1$ with $|N|<\infty$ then $$\gamma^{\max}_H(k,n)\leq \gamma^{\max}_G(k,n)\leq |N|\cdot \gamma^{\max}_H(k,n).$$ The result follows.
\end{proof}

\end{document}